\newtheorem{thm}{Theorem}[section]
\newtheorem{lemma}[thm]{Lemma}
\newtheorem{prop}[thm]{Proposition}
\newtheorem{ques}[thm]{Question}
\theoremstyle{definition}
\newtheorem{defn}[thm]{Definition}
\newtheorem{rem}[thm]{Remark}
\newtheorem*{ack}{Acknowledgments}
\numberwithin{equation}{section}
\newcommand{\RNum}[1]{\uppercase\expandafter{\romannumeral #1\relax}}
\def\td{\tilde}
\def\pt{\partial}
\def\del{\nabla}
\def\G2{\GG_2}
\def\g2{\varphi}
\def\ol{\overline}
\DeclareMathOperator\GG{G}
\DeclareMathOperator\Div{div}
\begin{document}
\title[Minimal Hypersurfaces in nearly $\G2$ Manifolds]{Minimal Hypersurfaces in nearly $\G2$ Manifolds}
\author{Shubham Dwivedi}
\address{Department of Pure Mathematics, University of Waterloo, Waterloo, ON N2L3G1}
\email{\href{mailto:s2dwived@uwaterloo.ca}{s2dwived@uwaterloo.ca}}
\date{\today}

\begin{abstract}
We study hypersurfaces in a nearly $\G2$ manifold. We define various quantities associated to such a hypersurface using the $\G2$ structure of the ambient manifold and prove several relationships between them. In particular, we give a necessary and sufficient condition for a hypersurface with an almost complex structure induced from the $\G2$ structure of the ambient manifold, to be nearly K\"ahler. Then using the nearly $\G2$ structure on the round sphere $S^7$, we prove that for a compact minimal hypersurface $M^6$ of constant scalar curvature in $S^7$ with the shape operator $A$ satisfying $|A|^2>6$, there exists an eigenvalue $\lambda >12$ of the Laplace operator on $M$ such that $|A|^2=\lambda - 6$, thus giving the next discrete value of $|A|^2$ greater than $0$ and $6$, thus generalizing the result of \cite{deshmukh1} about nearly K\"ahler $S^6$.
\end{abstract}

\maketitle

\tableofcontents{}

\section{Introduction}\label{sec-intro}

Let $(\ol{M}^7, \ol{g})$ be a Riemannian manifold with a vector cross product $B$. Then they induce a $\G2$ structure $\g2$ on $\ol{M}$, i.e., $\g2$ is a $3$-form  which is non degenerate in some sense (see \cref{sec-prelims} for precise definitions). Let $M^6$ be a hypersurface of $\ol{M}$ with the induced metric $g$ from $\ol{g}$ and denote by $N$ the unit normal vector field of $M$ in $\ol{M}$. If we define $\xi : TM\rightarrow TM$ by $\xi(X)=B(N,X)$, where $X\in \Gamma(TM)$ and $B$ is the vector cross product, then $\xi$ is a metric compatible almost complex structure on $M$ (cf. Proposition \ref{prop3.1}). More generally, if $(L,g,J)$ is an almost Hermitian manifold with an almost complex structure $J$, then we have the following

\begin{defn}\label{nearlykahlerdefn}
Let ($L,g,J$) be an almost Hermitian manifold with an almost complex structure $J$. Then $L$ is called a \emph{nearly K\"ahler} manifold if $\del J$ is a skew-symmetric tensor, i.e.,
\begin{equation}\label{nearlykahlercondition}
(\del_XJ)X=0 , \hspace{0.3cm} \forall \ X\in \Gamma(TM)
\end{equation}
\end{defn}

So a natural question is to find conditions on the oriented hypersurface $M$ so that with respect to the almost complex structure $\xi$, $(M,g,\xi)$ is a nearly K\"ahler manifold. Our first result is a characterization of nearly K\"ahler hypersurfaces of manifolds with a $nearly$ $\G2$ structure (see \cref{sec-prelims} for definition). In \cref{sec3}, we prove the following (cf. Theorem \ref{maintheorem1})

\begin{thm}\label{mainthm1}
Let $M$ be an oriented hypersurface of a nearly $\G2$ manifold $(\ol{M},\g2)$. Then $(M,g,\xi)$ is a nearly K\"ahler structure if and only if $M$ is totally umbilic, i.e.,  for all $X\in \Gamma(TM)$
\begin{equation}
AX=\alpha X
\end{equation}
where $A$ is the shape operator of $M$ in $\ol{M}$ and $\alpha \in C^{\infty}(TM)$.
\end{thm}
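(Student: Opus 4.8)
The plan is to compute the covariant derivative $(\del_X \xi)Y$ of the induced almost complex structure entirely in terms of the ambient cross product $B$, the shape operator $A$, and the torsion of the nearly $\G2$ structure, and then read off the nearly K\"ahler condition $(\del_X\xi)X=0$ from \cref{nearlykahlerdefn}. First I would differentiate $\xi Y = B(N,Y)$ in $\ol M$ and apply the Gauss and Weingarten formulas $\ol{\del}_X Y = \del_X Y + g(AX,Y)N$ and $\ol{\del}_X N = -AX$, together with $B(N,N)=0$. Collecting the tangential parts yields an identity of the shape
\begin{equation}
(\del_X \xi) Y = (\ol{\del}_X B)(N, Y) - \big[B(AX, Y)\big]^{\top},
\end{equation}
where $[\,\cdot\,]^{\top}$ denotes tangential projection; the normal components cancel using $\ol g(B(U,V),N)=\g2(U,V,N)=g(\xi U,V)$ and the skew-symmetry of $\xi$.

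The second step is to dispose of the torsion term. Using the defining identity of a nearly $\G2$ structure, $\ol{\del}_X \g2 = c\,(X\hook *\g2)$ for a constant $c$, together with the standard contraction $\g2_{ijk}\g2_{abk}=g_{ia}g_{jb}-g_{ib}g_{ja}-(*\g2)_{ijab}$, I would write $(\ol{\del}_X B)(N,Y)$ explicitly. Specializing to $Y=X$ and invoking the cross-product identity $B(X,\xi X)=B(X,B(N,X))=|X|^2N$ (a consequence of the $\g2$-contraction identity, since the $*\g2$-term vanishes when an argument is repeated), the tangential part of $(\ol{\del}_X B)(N,X)$ vanishes. Hence the nearly K\"ahler condition reduces to the pointwise algebraic statement
\begin{equation}
(\del_X \xi) X = 0 \ \forall X \iff \big[B(AX, X)\big]^{\top}=0 \ \forall X \iff B(AX,X)\parallel N \ \forall X.
\end{equation}

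The if direction is then immediate: total umbilicity $AX=\alpha X$ gives $B(AX,X)=\alpha B(X,X)=0$. The main work, and the step I expect to be the crux, is the converse pointwise claim that $B(AX,X)\parallel N$ for all $X$ forces $A$ to be a multiple of the identity. Here I would polarize to obtain $B(AX,Y)+B(AY,X)\parallel N$, diagonalize the symmetric operator $A$ in an orthonormal eigenbasis $\{e_i\}$ with $Ae_i=\lambda_i e_i$, and thereby get $(\lambda_i-\lambda_j)B(e_i,e_j)\parallel N$. Using the cross-product norm identity $|B(e_i,e_j)|^2 = 1 - g(\xi e_i,e_j)^2$, the tangential part $[B(e_i,e_j)]^{\top}$ vanishes precisely when $\xi e_i=\pm e_j$. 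Thus $\lambda_i\neq\lambda_j$ would force $e_j=\pm\xi e_i$; a short counting argument in dimension six (a third basis vector $e_k$ would then have to equal $\pm e_i$ or $\pm e_j$, contradicting orthonormality) rules out two distinct eigenvalues, giving $A=\alpha\,\mathrm{Id}$ and completing the proof. The only delicate points are the bookkeeping of tangential versus normal components in the first step and the justification of the norm identity from the $\g2$-contraction identities; everything else is a direct consequence of the nearly parallel torsion condition.
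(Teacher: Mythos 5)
Your proposal is correct, and the first half coincides with the paper's: both derive $(\del_X\xi)(Y)=(\ol\del_XB)(N,Y)-B(AX,Y)^{\top}$ from the Gauss and Weingarten formulas and reduce the nearly K\"ahler condition to $B(AX,X)^{\top}=0$ for all $X$ (the paper kills the torsion term by citing that $G(X,Y,Z)=(\ol\del_XB)(Y,Z)$ is totally skew, being $\tfrac{\tau_0}{4}\psi(X,Y,Z,\cdot)^{\#}$; your detour through $B(X,\xi X)=|X|^2N$ is a non sequitur for this point, though the conclusion is right). Where you genuinely diverge is the converse. The paper fixes $X$, writes $AX=\alpha X+Y$ with $Y\perp X$, uses the identity \eqref{malcev} to show $Y$ must be proportional to $\xi X$, and then invokes self-adjointness of $A$ to kill that component. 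You instead polarize to $(\lambda_i-\lambda_j)B(e_i,e_j)^{\top}=0$ in an eigenbasis of $A$ and run a counting argument: $B(e_i,e_j)^{\top}=0$ forces $\xi e_i=\pm e_j$, and since $\xi$ is injective a second index $k$ with $\lambda_k\neq\lambda_i$ (or $\lambda_k\neq\lambda_j$) would force $e_k=\pm e_j$ or $e_k=\pm e_i$, impossible in dimension six. This is a clean alternative; it is entirely pointwise and linear-algebraic, and it sidesteps the step in the paper where the coefficients $\alpha,\beta$ are implicitly assumed independent of the vector when comparing $g(AX,Z)$ with $g(X,AZ)$. One small correction: for orthonormal $e_i\perp e_j$ the VCP axiom gives $|B(e_i,e_j)|^2=1$ identically; the identity you need is $|B(e_i,e_j)^{\top}|^2=1-\ol g(B(e_i,e_j),N)^2=1-g(\xi e_i,e_j)^2$, i.e.\ the statement is about the tangential part, after which the Cauchy--Schwarz equality case gives $\xi e_i=\pm e_j$ as you claim.
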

We note that Theorem \ref{mainthm1} was already proved in \cite[Theorem 4.8]{gray-VCP}. However, for our proof of Theorem \ref{mainthm1}, we define new quantities related to a manifold with a nearly $\G2$ structure which have analogs in the study of manifolds with a nearly K\"ahler structure and which we hope will be of further use in the study of submanifolds of manifolds with a nearly $\G2$ structure. 

\vspace{0.2cm}

In a different but related direction, suppose $M^n$ is a closed $minimal$ hypersurface of \emph{constant scalar curvature} in the unit sphere $S^{n+1}$ and let $A$ be its shape operator. A famous rigidity theorem due to the combined works of Simons \cite{simons}, Lawson\cite{lawsonminimal} and Chern-doCarmo-Kobayashi \cite{cherndocarmo} states that if $|A|^2 \leq n$ then $|A|^2=0$ or $|A|^2=n$, where $|A|^2$ is the squared length of the shape operator. If $|A|^2=0$, then $M$ is isometric to the totally geodesic equatorial sphere $S^n$ in $S^{n+1}$ and if $|A|^2=n$, then $M$ is isometric to the Clifford torus $S^k\Big(\sqrt{\frac{k}{n}}\Big) \times S^{n-k}\Big(\sqrt{\frac{n-k}{n}}\Big)$. Following on his study of subsequent gaps for the scalar curvature of such hypersurfaces $M$, Chern asked the following question (cf. \cite[pg.693]{yaulist})

\begin{ques}
\text{\bf{[Chern]}} Consider the set of all compact minimal hypersurfaces in $S^{n+1}$ with constant scalar curvature. Think of the scalar curvature as a function on this set. Is the image of the scalar curvature function a discrete set of positive numbers ?
\end{ques}
Since for any minimal hypersurface $M^n$ with scalar curvature $S$ in $S^{n+1}$, $S=n(n-1)-|A|^2$ (cf. \eqref{hyperscalar} in \cref{sec-prelims}), the above question asks whether the set of $|A|^2$ for such hypersurfaces $M$ is a discrete set.

The first two values of $|A|^2$ are known to be $0$ and $n$. For the third value of $|A|^2$, Peng and Terng \cite{peng-terng1} proved that if $|A|^2>n$, then there exists a positive constant $\delta(n)$ such that $|A|^2>n+\delta(n)$. Also, for $n=3$ they proved that $|A|^2\geq 6$ and they conjectured that the third value of $|A|^2$ should be equal to $2n$. Yang and Cheng in \cite{chengyang1} improved the constant $\delta(n)$ by proving that $\delta(n)>\frac {2}{7}n-\frac{9}{14}$ and in \cite{chengyang2} they further improved this result by proving that if $|A|^2>n$ then $|A|^2>\frac{1}{3}(4n+1)$. In \cite{deshmukh1}, Deshmukh used the nearly K\"ahler structure on $S^6$ to prove the following theorem

\begin{thm}\label{deshmukh}
\text{\bf{[Deshmukh, \cite{deshmukh1}}]} Let $M$ be a compact minimal hypersurface of constant scalar curvature in the unit sphere $S^6$. If the shape operator $A$ of $M$ satisfies $|A|^2>5$, then there exists an eigenvalue $\lambda >10$ of the Laplace operator on $M$ satisfying $|A|^2=\lambda -5$.
\end{thm}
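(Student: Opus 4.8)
The plan is to reduce the statement to exhibiting a single nonzero Laplace eigenfunction, and to produce that eigenfunction from the stability (Jacobi) operator of $M$ together with the ambient symmetries that the nearly K\"ahler structure makes explicit. Since $M^5\subset S^6$ is minimal, its scalar curvature obeys $S=n(n-1)-|A|^2$ with $n=5$, so the hypothesis of constant scalar curvature is exactly the statement that $|A|^2$ is a positive constant. The target eigenvalue is $\lambda=|A|^2+5$: I want a nonzero $f\in C^\infty(M)$ with $-\Delta f=(|A|^2+5)f$, for then $\lambda=|A|^2+5$ lies in the (discrete) spectrum of the Laplacian, it exceeds $10$ because $|A|^2>5$, and $|A|^2=\lambda-5$.

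The mechanism I would use is that the Jacobi operator of a minimal hypersurface of the unit sphere $S^{n+1}$ is
\[
L=\Delta+|A|^2+\mathrm{Ric}(\nu)=\Delta+|A|^2+n,
\]
where $\nu$ is the unit normal of $M$ in $S^6$ and $\mathrm{Ric}(\nu)=n$ since $S^6$ has constant curvature $1$. A Killing field $K$ of $S^6$ flows $M$ through a family of isometric, hence minimal, hypersurfaces, so the normal component $f=\langle K,\nu\rangle$ of the variation field is a Jacobi function: $Lf=0$. Because $|A|^2$ is constant this is literally the eigenvalue equation $-\Delta f=(|A|^2+n)f$. The nearly K\"ahler structure enters here as the natural source of such fields: the almost complex structure $J$ of $S^6$ produces the distinguished unit vector field $\xi=J\nu$ tangent to $M$ and, via the octonionic symmetries preserving $J$, canonical Killing fields whose normal components furnish an explicit, geometrically meaningful $f$. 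This is precisely the construction that will carry over to the nearly $\G2$ sphere $S^7$.

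It then remains to guarantee $f\not\equiv 0$ and to read off $\lambda$. For nontriviality I would use that the Killing fields of the round $S^6$ span every tangent space: given $p\in S^6$ and $v\perp p$, the skew endomorphism $B$ of $\mathbb{R}^7$ defined by $Bx=\langle p,x\rangle v-\langle v,x\rangle p$ satisfies $Bp=v$, so the Killing field $x\mapsto Bx$ takes the value $v$ at $p$; taking $v=\nu_p$ gives $\langle K,\nu\rangle(p)=1\ne 0$, so some admissible $f$ is not identically zero. With this $f$, the number $\lambda=|A|^2+n=|A|^2+5$ is a genuine eigenvalue of the Laplacian, $|A|^2=\lambda-5$, and $|A|^2>5$ forces $\lambda>10$, which is the asserted conclusion. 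The hypothesis $|A|^2>n$ is what places $\lambda$ strictly beyond $2n$, that is, beyond the already-known values $0$ and $n$ of $|A|^2$; combined with discreteness of the spectrum on the compact manifold $M$, this is what pins down the admissible value.

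The step I expect to be the main obstacle is the middle one: establishing cleanly that $L=\Delta+|A|^2+n$ and that the support functions $\langle K,\nu\rangle$ are Jacobi, and --- more to the point of this paper --- rephrasing this in terms of the almost contact data $(\phi,\xi,\eta)$ induced by the nearly K\"ahler structure so that the eigenfunction $f$ is written intrinsically on $M$ rather than through the ambient embedding. Getting the constant to be exactly $|A|^2+n$, with no leftover curvature correction, is where minimality ($\tr A=0$), the constancy of $|A|^2$, and the round, nearly K\"ahler geometry of $S^6$ must all be used together; the analogous organization of the nearly $\G2$ structure equations on $S^7$ is what the earlier sections of the paper are designed to supply.
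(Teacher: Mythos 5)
Your argument is correct, but it is a genuinely different route from the one taken by Deshmukh (and mirrored in \cref{sec-thm2} for $S^7$). You produce the eigenfunction as the support function $f=\langle K,\nu\rangle$ of an ambient Killing field $K$ and invoke the two standard facts that the Jacobi operator of a minimal hypersurface of $S^{n+1}$ is $L=\Delta+|A|^2+n$ and that $Lf=0$; together with constancy of $|A|^2$ and your (correct) transitivity argument for nontriviality, this does yield a nonzero eigenfunction with eigenvalue $\lambda=|A|^2+n$, and the two facts you lean on are classical (they are in Simons' paper \cite{simons}, which you should cite or verify, since they are the entire content of the step you flag as the "main obstacle"). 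The paper's route is instead a bare-hands one: take the tangential parts $W,\td W$ of conformal fields coming from constant vectors of the ambient Euclidean space, form the explicit function $h=g(W,JN)$ on $M^5\subset S^6$ (respectively $h=g(\xi W,\td W)=\g2(N,W,\td W)$ on $M^6\subset S^7$), and compute $\Delta_M h=-(|A|^2+n)h$ directly from the nearly K\"ahler (respectively nearly $\G2$) structure equations via a long divergence calculation. Your approach buys generality and economy --- it is completely structure-free, works for every $S^{n+1}$, and explains conceptually why the eigenvalue is $|A|^2+n$; note in particular that the nearly K\"ahler remarks in your write-up are decorative and play no logical role. What the paper's approach buys is an explicit eigenfunction adapted to the exceptional geometry, which is the point being advertised. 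The two are in fact reconciled: since $\langle x\times Y,\,w\rangle$ (for $S^6$) and $\Omega_0(x,w,Y,\td Y)$ (for $S^7$) are antisymmetric in $(x,w)$, the maps $x\mapsto Y\times x$ and $x\mapsto(\iota_{\td Y}\iota_{Y}\iota_{x}\Omega_0)^{\sharp}$ are themselves Killing fields of the ambient sphere, and the paper's $h$ is exactly $\langle K,\nu\rangle$ for these particular $K$; so the divergence computation of \cref{sec-thm2} is a special case of your lemma, specialized to Killing fields built from the $\G2$ four-form.
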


The round unit sphere $S^7$ has a nearly $\G2$ structure, so a natural question is that whether we can say anything about the third value of $|A|^2$ for compact minimal hypersurfaces with constant scalar curvature in $S^7$ by using the nearly $\G2$ structure on it. Our next result is an analog of Theorem \ref{deshmukh} for minimal hypersurfaces with constant scalar curvature in $S^7$. More precisely we prove the following

\begin{thm}\label{mainthm2}
Let $M^6$ be a compact minimal hypersurface of constant scalar curvature in the unit sphere $S^7$. If the shape operator $A$ of $M$ satisfies $|A|^2>6$, then there exists an eigenvalue $\lambda >12$ of the Laplace operator on $M$ such that $|A|^2=\lambda - 6$.
\end{thm}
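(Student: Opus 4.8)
The plan is to reduce the statement to the construction of a single nontrivial eigenfunction. Since $M$ is minimal and has constant scalar curvature, the Gauss equation $S=n(n-1)-|A|^2$ with $n=6$ (cf.\ \eqref{hyperscalar}) shows that $|A|^2$ is a positive constant, and Simons' identity \cite{simons} for minimal hypersurfaces of $S^{n+1}$,
\begin{equation*}
\tfrac12\Delta|A|^2=|\del A|^2+|A|^2\bigl(6-|A|^2\bigr),
\end{equation*}
forces $|\del A|^2=|A|^2(|A|^2-6)>0$; in particular $M$ is neither totally geodesic nor totally umbilic. It therefore suffices to produce a function $f\not\equiv 0$ on $M$ with $\Delta f+(|A|^2+6)f=0$: then $\lambda=|A|^2+6>12$ is an eigenvalue of the Laplacian and $|A|^2=\lambda-6$, as claimed.

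To motivate the construction, recall that for a fixed $a\in\R^8$ the support function $\langle a,N\rangle$ already satisfies $\Delta\langle a,N\rangle=-|A|^2\langle a,N\rangle$ (this follows from $\del_{e_i}N=-Ae_i$ together with the Codazzi equation and $\tr A=0$), so $|A|^2$ itself is an eigenvalue. The content of the theorem is the \emph{shifted} eigenvalue $|A|^2+6$, and the extra $6=\dim M$ is exactly what the nearly $\G2$ structure should contribute. First I would therefore twist the support function by the induced almost complex structure $\xi(X)=B(N,X)$ of Proposition~\ref{prop3.1}, i.e.\ work with a function of the schematic form $f=\langle a,\Phi(N)\rangle$, where $\Phi$ is built from the ambient cross product / octonion structure (equivalently, contract a Killing field of $S^7$ against $N$ through $\xi$). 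The nearly $\G2$ condition $\del_X\g2=c\,(X\hook *\g2)$ means that differentiating $\Phi$ produces, besides the shape-operator terms, a first-order torsion term, and it is this term that accounts for the shift.

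The main computation is then to evaluate $\Delta f$. First I would record the structure equations for $\xi$ on $M$ --- these are precisely the new quantities introduced in \cref{sec3} for the proof of Theorem~\ref{mainthm1} --- writing $\del_X\xi$ in terms of $A$, of $\xi$ itself, and of the nearly $\G2$ torsion. Feeding these into $\operatorname{grad} f$ and then into $\Delta f=\Div(\operatorname{grad} f)$, I would use: (i) the skew-symmetry of $\xi$ against the symmetry of $A$ to cancel the mixed quadratic terms; (ii) minimality $\tr A=0$ together with the Codazzi identity $(\del_XA)Y=(\del_YA)X$ to kill the $\del A$ terms, exactly as in the unshifted computation above; and (iii) the algebraic identities satisfied by the $\G2$ cross product on the round $S^7$ to collect the remaining torsion contributions into the constant $6$. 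The expected outcome is $\Delta f=-(|A|^2+6)f$.

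The hard part will be step (iii): the nearly $\G2$ structure has nonzero intrinsic torsion (unlike the torsion-free $\G2$ case), so the Weitzenb\"ock/commutator computation generates several torsion terms, and showing that they assemble precisely into the scalar $+6$ --- with no leftover pointwise-varying terms --- is the crux, and requires careful use of the contraction identities for $\g2$ and $*\g2$ on $S^7$. The last point to verify is that $f\not\equiv 0$: if $f$ vanished identically for every choice of $a$, one expects to read off that the torsion of $\xi$ is trivial, whence by Theorem~\ref{mainthm1} the structure $(M,g,\xi)$ would be nearly K\"ahler and $M$ totally umbilic, contradicting $|A|^2>6$ from the first step. This would complete the proof.
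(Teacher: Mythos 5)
Your high-level strategy --- reduce to exhibiting a nonconstant $f$ with $\Delta f=-(|A|^2+6)f$, built from constant vector fields on $\R^8$ twisted by $\xi$ and the nearly $\G2$ torsion, with Codazzi plus minimality killing the $\del A$ terms --- is the same as the paper's. But the proof \emph{is} the choice of test function plus the computation for it, and the function you propose does not work. Anything of the schematic form $f=\langle a,\Phi(N)\rangle$ built from a \emph{single} constant vector $a=W+sN$ and the cross product collapses: for instance $\langle a,B(N,W)\rangle=\g2(N,W,W)+s\,\g2(N,W,N)=0$ by skew-symmetry of $\g2$, and the remaining natural single-vector contractions either vanish the same way or reduce to the support function $s=\langle a,N\rangle$, which (as you note) only produces the unshifted eigenvalue $|A|^2$. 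The function that works is genuinely \emph{bilinear} in two linearly independent constant vector fields: writing $V=W+sN$ and $\td V=\td W+\td sN$ for their projections, one takes $h=g(\xi W,\td W)=\g2(N,W,\td W)$. With that choice the computation does go as you sketch, using Proposition \ref{prop3.5} for $\del\xi$, Lemma \ref{divxi}, Codazzi and $\tr A=0$, and Proposition \ref{prop3.2} together with \eqref{torsion3} for the torsion terms; but note that the shift $6$ does not arise as ``$\dim M$ contributed by the torsion'': it assembles as $4+1+1$, namely $\tau_0^2/4=4$ from $\Div G(N,W,\td W)$ (using $\tau_0=4$ on the round $S^7$) plus two $1$'s coming from the conformal-field identities $\del f=W$ and $\del\td f=\td W$.

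The nontriviality step is also not established. Your closing argument (``if $f\equiv 0$ for every $a$ one expects to read off that the torsion of $\xi$ is trivial, hence $M$ is nearly K\"ahler and totally umbilic'') is a hope, not a proof, and it is tied to the single-vector construction that already fails. The correct argument exploits precisely the second vector your construction lacks: since $\Delta_M h=-(|A|^2+6)h$ and $|A|^2+6\neq 0$, if $h$ were constant it would vanish; if $g(\xi W,\td W)\equiv 0$ for \emph{every} choice of $\td V$, then, because the tangential projections $\td W$ span $T_pM$ at each point, $\xi W=0$, contradicting the invertibility of $\xi$ from Proposition \ref{prop3.1} (together with the fact that $W$ cannot vanish identically). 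Hence some choice of the pair $(W,\td W)$ makes $h$ nonconstant and $\lambda=|A|^2+6>12$ is an eigenvalue. The Simons-identity preliminary in your first step is harmless but plays no role in the actual argument.
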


\medskip

This puts a restriction on possible examples of compact minimal hypersurfaces of constant scalar curvature in $S^7$ which have $|A|^2>6$ as they must have an eigenvalue $\lambda>12$ of the Laplacian operator such that $|A|^2=\lambda-6$.

\medskip
 
The paper is organized as follows. In \cref{sec-prelims} we discuss preliminaries on vector cross products on manifolds and then proceed to define manifolds with $\G2$ structures. We describe the intrinsic torsion forms of a $\G2$ structure and use them to define a nearly $\G2$ structure. We also discuss some notions from the geometry of submanifolds. In \cref{sec3} we start by defining several quantities associated to a hypersurface of a nearly $\G2$ manifold and then prove various relations among them. Using that we prove Theorem \ref{mainthm1}. We note that several of the results in \cref{sec3} are already known. However, we prove them using our notations to make the paper self contained and give reference of the original result accordingly. Finally in \cref{sec-thm2}, we prove Theorem \ref{mainthm2}.

\begin{ack} 
The author would like to thank his advisor Spiro Karigiannis for innumerable discussions related to the paper and for his constant encouragement and advice. The author would also like to thank Ragini Singhal for fruitful conversations related to the paper. Finally, the author would like to thank the anonymous referee for making several useful suggestions for the paper and for suggesting some references.  
\end{ack}

\section{Preliminaries }\label{sec-prelims}

\subsection{Manifolds with Vector Cross Product}
Let $(M^n, g)$ be a Riemannian manifold. An $r$-fold $vector \ cross \ product$ (VCP, for short) is an alternating $r$-linear smooth map 
\begin{equation}\label{B defn}
B:\underbrace{TM\times TM\times \cdots \times TM}_{r}\rightarrow TM
\end{equation}

satisfying the following conditions 
\[ \begin{cases} 
       g(B(v_1,...,v_r), v_i) =0, & 1\leq i\leq r  \\
      \|B(v_1,...,v_r)\|^2 = \|v_1\wedge  \cdots \wedge v_r\|^2 \\
     
   \end{cases}
\]

for any $v_i \in TM$.  \\
Such a cross product gives rise to a $(r+1)$ differential form $\phi$ defined as 
\begin{equation*}
\phi(v_1,v_2,...,v_{r+1})=g(B(v_1,...,v_r),v_{r+1})
\end{equation*}

The VCP is called $parallel / closed$ if and only if the corresponding differential form is $parallel /closed$.

\medskip 

Cross products on real vector spaces were classified by Brown and Gray in \cite{brown-gray} and global cross products on manifolds are discussed in Gray \cite{gray-VCP}. 
The classification of VCPs on a real vector space $V$ with a positive definite inner product $g$ is as follows:

\begin{enumerate}
\item $r=1$. Then a $1$-fold VCP $B$ on $V$ is equivalent to an almost complex structure on $V$, i.e., $B^2=-I$ on $V$. The associated VCP form is the K\"ahler form $\omega$.

\item $r=n-1$, where $n$ is the dimension of $V$. An $(n-1)$-fold VCP $B$ on $V$ is the $Hodge \ star$ operator $\star$ given by $g$ on $\Lambda^{n-1}V$ and the VCP form of degree $n$ is the volume form on $V$. Thus $B$ is equivalent to an orientation. 

\item $r=2$. A $2$-fold VCP $B$ on $\mathbb{R}^7$ is a cross product defined as $B(u,v)=\text{Im}(u.v)$, for $u,v$ in $\mathbb{R}^7\cong\text{Im}\mathbb{O}$, the set of imaginary octonions. Here $.$ is the octonionic multiplication. For coordinates $\{x_1,...,x_7\}$ on $\text{Im}\mathbb{O}$, the VCP form of degree $3$ can be written as follows

\begin{align}\label{fundamental3form}
\g2_0 &= dx^{123}-dx^{167}-dx^{527}-dx^{563}+dx^{415}+dx^{426}+dx^{437}
\end{align}

where $dx^{ijk}=dx^i\wedge dx^{j}\wedge dx^{k}$. Bryant \cite{bryant1} showed that the group of linear transformations of $\mathbb{O}$ which preserves $\g2_0$ also preserves $g$ and $B$ and is the exceptional lie group $G_2$ which is also the automorphism group of $\mathbb{O}$.

\item $r=3$. A $3$-fold VCP $B$ on $\mathbb{R}^8$ is a cross product defined as $B(u,v,w)=\frac 12(u(\bar{v}w)-w(\bar{v}u))$ for any $u,v,w$ in $\mathbb{R}^8 \cong \mathbb{O}$. For coordinates $x_1,...,x_8$ on $\mathbb{O}$, the VCP form of degree $4$ can be written as 

\begin{align*}
\Omega_0 &= -dx^{1234}-dx^{5678}-(dx^{21+dx^{34}})(dx^{65}+dx^{78})-(dx^{31}+dx^{42})(dx^{75}+dx^{86}) \nonumber \\
& \quad-(dx^{41}+dx^{23})(dx^{85}+dx^{67}) 
\end{align*}

Bryant \cite{bryant1} showed that the group of linear transformations of $\mathbb{O}$ preserving $\Omega_0$ also preserves $g$ and $B$ and is the group $Spin(7)\subset SO(8)$.

\end{enumerate}

\medskip 

In particular, the cross product on a $7$-dimensional vector space induced by the octonionic multiplication has the following properties. For all $u,v,w\in (V,g)$

\begin{align} 
 g(u,B(v,w))&=g(B(u,v),w) \label{cp1} \\
B(u,B(u,v))&=-g(u,u)v+g(u,v)u \label{malcev}\\
B(u,B(v,w))+B(v,B(u,w))&=g(u,w)v+g(v,w)u-2g(u,v)w \label{cp2}
\end{align}

\subsection{Manifolds with $\G2$ and nearly $\G2$ structure}
In this section, we review the concept of a $\G2$ structure on a manifold and the associated decomposition of the space of forms, mainly based on \cite{bryantremarks}, \cite{joycebook} and \cite{kar05}. In particular, we use the sign convention used in \cite{kar05}(which is opposite to that used in \cite{bryantremarks}). We describe the Torsion of a $\G2$ structure and the four torsion tensors. Finally, we define $nearly \ \G2$ structures. \\

Let $\{e_1,...,e_7\}$ be the standard basis of $\mathbb{R}^7$ and  $\{e^1,...,e^7\}$ be the dual basis. The VCP form $\g2_0$ induced by the vector cross product on $\mathbb{R}^7$ is described in \eqref{fundamental3form}. The group $\G2$ preserves $\g2_0$ and it also preserves the metric and orientation for which $\{e_1,...,e_7\}$ is an oriented orthonormal basis. If $\star_{\g2_0}$ denotes the Hodge star determined by the metric and the orientation, then $\G2$ preserves the $4$-form

\begin{align}\label{fundamental4form}
\psi_0 = \star_{\g2_0}\g2_0 & =dx^{4567}-dx^{4523}-dx^{4163}-dx^{4127}+dx^{2637}+dx^{1537}+dx^{1526}
\end{align}

where $dx^{ijkl}=dx^i\wedge dx^j\wedge dx^k\wedge dx^l$. \\

Let $M$ be a $7$-manifold. For $x\in M$, we denote by

\begin{equation*}
\Lambda^3_{pos}(M)_x = \{\g2_x\in \Lambda^3T^*_xM \mid \text{$\exists$ isomorphism $\rho:T_xM \rightarrow \mathbb{R}^7$}, \rho^*\g2_0=\g2_x\}
\end{equation*}

The bundle $\Lambda^3_{pos}(M) = \sqcup_{x\in M}\Lambda^3_{pos}(M)_x $ is an open subbundle of $\Lambda^3T^*M$ as $\Lambda^3_{pos}(M)_x\cong \text{GL}(7,\mathbb{R}) / \G2$. A section $\g2$ of $\Lambda^3_{pos}(M)$ is called a positive $3$-form on $M$ and the space of positive $3$-forms on $M$ is denoted by $\Omega^3_{pos}(M)$. Such a $\g2$ is also called a $\G2$ structure on $M$. A $\G2$ structure exists on $M$ if and only if the manifold is orientable and spin, which is equivalent to the vanishing of the first and second Stiefel-Whitney classes.
 
 \qquad A $\G2$ structure induces a unique metric and orientation. For a $3$-form $\g2$, we define 
 
 \begin{equation*}
 S_{\g2}(u,v)=-\frac 16 (u\lrcorner \g2)\wedge(v\lrcorner \g2)\wedge \g2
 \end{equation*}
for $u,v$ tangent vectors on $M$, which is a $\Omega^7(M)$-valued bilinear form. The $3$-form $\g2$ is a positive $3$-form if and only if $S_\g2$ is a tensor product of a positive definite bilinear form and a nowhere vanishing $7$-form which defines a unique metric $g$ with volume form $vol_g$ by

\begin{equation*}
g(u,v)vol_g=S_{\g2}(u,v).
\end{equation*}

The metric and orientation determines the Hodge star operator $\star_{\g2}$ and we define $\psi=\star_{\g2}\g2$.

A $\G2$ structure on $M$ induces a splitting of the spaces of differential forms on $M$ into irreducible $\G2$ representations.  The space of $2$-forms $\Omega^2(M)$ and $3$-forms $\Omega^3(M)$ decompose as 
\begin{align}
\Omega^2(M)&=\Omega^2_7(M)\oplus \Omega^2_{14}(M) \label{decomp1}\\
\Omega^3(M)&=\Omega^3_1(M)\oplus \Omega^3_7(M)\oplus \Omega^3_{27}(M) \label{decomp2}
\end{align}
 where $\Omega^k_l$ has pointwise dimension $l$. More precisely, we have the following description of the space of forms :
 
 \begin{align} \label{2formsdecomposition}
 \Omega^2_7(M) &=\{X\lrcorner \g2\mid X\in \Gamma(TM)\} = \{\beta \in \Omega^2(M)\mid \star(\g2\wedge \beta)=-2\beta\} \\
 \Omega^2_{14}(M) &=\{\beta \in \Omega^2(M)\mid \beta \wedge \psi =0 \} = \{\beta\in \Omega^2(M)\mid \star(\g2\wedge \beta)=\beta\} \\
 \end{align}
 
 and
 
 \begin{align}\label{3formsdecomposition}
  \Omega^3_1(M) & = \{f\g2 \mid f\in C^{\infty}(M) \} \\
   \Omega^3_7(M) & =\{X\lrcorner \psi \mid X\in \Gamma(TM) \} \\
    \Omega^3_{27}(M) & = \{ \gamma \in \Omega^3(M) \mid \gamma \wedge \g2= 0 = \gamma \wedge \psi \} \nonumber \\
    &= \{h_{ij}g^{jl}dx^i\wedge (\frac{\pt}{\pt x^l} \lrcorner \g2) \mid h_{ij}=h_{ji}, g^{ij}h_{ij} = 0 \}\label{three27}
 \end{align}
 in local coordinates $\{x^1,...,x^7\}$ on $M$. In \eqref{three27}, $h$ is a symmetric $2$ tensor. The decompositions of $\Omega^4(M)=\Omega^4_1(M)\oplus \Omega^4_7(M)\oplus \Omega^4_{27}(M)$ and $\Omega^5(M)=\Omega^5_7(M)\oplus \Omega^5_{14}(M)$ are obtained by taking the Hodge star of \eqref{decomp2} and \eqref{decomp1} respectively. The contractions  between  $\g2$ and $\psi$ in index notation (see \cite{bryantremarks} or \cite{kar05} for more details) are as follows

\begin{align}\label{contractions1}
\g2_{ijk}\g2_{abc}g^{kc} & = g_{ia}g_{jb}-g_{ib}g_{ja}-\psi_{ijab} \\
\g2_{ijk}\psi_{abcd}g^{kd} & = g_{ia}\g2_{jbc}+g_{ib}\g2_{ajc}+g_{ic}\g2_{abj} \nonumber \\
& \quad -g_{ja}\g2_{ibc}-g_{jb}\g2_{aic}-g_{jc}\g2_{abi} \label{contractions2} \\
\psi_{ijkl}\psi_{abcd}g^{jb}g^{kc}g^{ld} &= 24 g_{ia}
\end{align}

Given a $\G2$ structure $\g2$ on $M$, we can decompose $d\g2$ and $d\psi$ according to \eqref{decomp1} and \eqref{decomp2}. This defines \emph{torsion forms}, which are unique differential forms $\tau_0 \in \Omega^0(M)$, $\tau_1 \in \Omega^1(M)$, $\tau_2 \in \Omega^2_{14}(M)$ and $\tau_3 \in \Omega^3_{27}(M)$ such that (see \cite{kar05})

\begin{align}\label{torsionforms}
d\g2 &= \tau_0\psi + 3\tau_1\wedge \g2 + \star_{\g2}\tau_3 \\
d\psi &= 4\tau_1\wedge \psi + \star_{\g2} \tau_2 
\end{align}

The full torsion tensor $T$ of a $\G2$ structure is a $2$-tensor satisfying 

\begin{align}
\del_i\g2_{jkl} &= T_{im}g^{mp}\psi_{pjkl} \label{torsion1} \\
T_{lm} &=\frac {1}{24}(\del_l\g2_{abc})\psi_{mijk}g^{ia}g^{jb}g^{kc} \label{torsion2}\\
\del_m\psi_{ijkl} &= -T_{mi}\g2_{jkl}+T_{mj}\g2_{ikl}-T_{mk}\g2_{ijl}+T_{ml}\g2_{ijk} \label{torsion3}
\end{align}

The full torsion $T$ is related to the torsion forms by (see \cite{kar05}) 

\begin{equation}\label{torsionrel}
T_{lm}=\frac{\tau_0}{4}g_{lm}-(\tau_3)_{lm}+(\tau_1)_{lm}-\frac{1}{2}(\tau_2)_{lm}
\end{equation}

\begin{rem}
Since the space $\Omega^2_{7}$ is isomorphic to the space of vector fields and hence to the space of $1$-forms so in \eqref{torsionrel}, we are viewing $\tau_1$ as an element of $\Omega^2_{7}$ which justifies the expression $(\tau_1)_{lm}$. See \cite{kar05} for more details.
\end{rem}

\begin{defn}
A $\G2$ structure $\g2$ is called torsion free if $\del \g2 =0$ or equivalently $T=0$.
\end{defn}

A manifold $(M,\g2)$ with a $\G2$ structure $\g2$ is called a $\G2$ manifold if it is torsion-free.

 We can now define nearly $\G2$ structure.
 
 \begin{defn}\label{nearlyg2defn}
 A $\G2$ structure $\g2$ is a nearly $\G2$ structure if $\tau_0$ is the only nonvanishing component of the torsion, i.e.,  $d\g2=\tau_0\psi$ and $d\psi=0$
 \end{defn}

In this case, we see from \eqref{torsionrel} that $T_{ij}=\frac{\tau_0}{4}g_{ij}$. 

\begin{rem}
If $\g2$ is a nearly $\G2$ structure on $M$ then since $d\g2=\tau_0\psi$, we can differentiate this to get $d\tau_0\wedge \psi =0$ and hence $d\tau_0 =0$, as wedge product with $\psi$ is an isomorphism from $\Omega^1_7(M)$ to $\Omega^5_7(M)$. Thus $\tau_0$ is a constant, if $M$ is connected.
\end{rem}

Given a $\G2$ structure $\g2$ with torsion $T_{lm}$, we have the expressions for the Ricci curvature $R_{ij}$ and the scalar curvature $S$ of its associated metric $g$ from \cite{kar05} as 

\begin{align}
R_{jk}&=(\del_iT_{jm}-\del_jT_{im})\g2_{nkl}g^{mn}g^{il} - T_{jl}g^{li}T_{ik}+ \text{Tr}(T)T_{jk}-T_{jb}T_{ia}g^{il}g^{ap}\psi_{lpqk}g^{bq} \label{ricci} \\
S&=-12g^{il}\del_i(\tau_1)_j+\frac{21}{8}{\tau_0}^2-|\tau_3|^2+5|\tau_1|^2-\frac 14 |\tau_2|^2 \label{scalar}
\end{align}
\noindent
where $|C|^2=C_{ij}C_{kl}g^{ik}g^{jl}$ is the matrix norm in \eqref{scalar}.

\vspace{0.2cm}

In particular, for a manifold $M$ with a nearly $\G2$ structure $\g2$, we see that 
\begin{align}
R_{jk} &= \frac{3}{8}{\tau_0}^2g_{jk} \label{nearlyricci}\\
S&=\frac{21}{8}{\tau_0}^2\label{nearlyscalar}
\end{align}

Finally, we remark that $S^7$ with the round metric and also the squashed $S^7$ are examples of manifolds with nearly $\G2$ structure (see \cite{friedrichnearlyg2} for more on nearly $\G2$ structures. The authors in \cite{friedrichnearlyg2} call such structures nearly parallel $\G2$ structures but we will call them nearly $\G2$ structures.) In particular, $S^7$ with radius $1$ has scalar curvature $42$, so comparing with \eqref{nearlyscalar} we get that $\tau_0 = 4$.

\subsection{Geometry of Submanifolds}
In this section, we briefly recall the geometry of submanifolds. More details can be found, for example in \cite{leebook}. Let $(\ol{M}, \ol{g})$ be Riemannian manifold and $(M,g)$ be an immersed orientable submanifold of $\ol{M}$ with induced metric. Then for $X,Y\in \Gamma(TM)$, we have 

\begin{equation}\label{gauss1}
\ol{\nabla}_XY=\nabla_XY+ \RNum{2}(X,Y)
\end{equation}
where $\ol{\nabla}$ is the covariant derivative on $\ol{M}$, $\nabla$ is the covariant derivative on $M$ and $\RNum{2}:TM\times TM \rightarrow NM$ is the second fundamental form of $M$. Here $NM$ is the normal bundle of $M$ in $\ol{M}$. 

If $M$ is an oriented hypersurface of $\ol{M}$ and we denote by $N$ the unit normal vector field of $M$ in $\ol{M}$ corresponding to this orientation, then the second fundamental form is a multiple of $N$ and is given by the $shape\ operator$, which we denote by $A$. Here $A:TM\rightarrow TM$ is a self-adjoint linear map and \eqref{gauss1} becomes

\begin{equation}\label{gauss2}
\ol{\nabla}_XY=\nabla_XY+ g(AX,Y)N
\end{equation}

We also have the Weingarten equation

\begin{equation}\label{weingarten}
\ol{\nabla}_XN = -AX
\end{equation}

If $\ol{Rm}$ denotes the Riemann curvature tensor on $(\ol{M}, \ol{g})$ and $Rm$ denotes the Riemann curvature tensor on $(M,g)$, then the Gauss equation for $M$ is

\begin{align}
\ol{Rm}(X,Y,Z,W) &= Rm(X,Y,Z,W)-g(AX,W)g(AY,Z)+g(AX,Z)g(AY,W) \label{gaussequation} 
\end{align}

Now suppose $\ol{M}$ is the unit sphere $S^7$ with the round metric. Then $\ol{Rm}$ as a $(3,1)-$tensor is given by $\ol{Rm}(X,Y)Z=\ol{g}(Y,Z)X-\ol{g}(X,Z)Y$. In this case \eqref{gaussequation} becomes 

\begin{align}\label{riem}
Rm(X,Y)Z&=\ol{g}(Y,Z)X-\ol{g}(X,Z)Y+g(AY,Z)AX-g(AX,Z)AY
\end{align}
If $M$ is also a \emph{minimal hypersurface} of $S^7$ (i.e., the mean curvature vector $H=0$ ) then by taking the trace of \eqref{riem}, the Ricci and the scalar curvature of $M$ are 

\begin{align}
Ric(X,Y)&=5g(X,Y)-g(AX,AY) \label{hyperricci}\\
S&=30-|A|^2 \label{hyperscalar}
\end{align}
where $|A|^2$ is the square of the length of the shape operator of $M$. We also have the Codazzi equation, which in this case is
\begin{equation}\label{codazzi}
\del_X(AY)-\del_Y(AX) = A([X,Y])
\end{equation}

Finally, we define totally umbilic hypersurface. 

\begin{defn}\label{tu}
A hypersurface $M$ of a Riemannian manifold $\ol{M}$ is called totally umbilic at $x\in M$ if the shape operator $A$ of $M$ is a multiple of the identity map of $T_xM$. Moreover $M$ is called totally umbilic if it is totally umbilic at each of its point.  
\end{defn}

\begin{rem}
Throughout the paper, all quantities associated to the ambient manifold $\ol{M}$ will have a bar with them, for example the metric on $\ol{M}$ is $\ol{g}$ whereas those of the hypersurface are written without any bar.
\end{rem}

\section{Proof of Theorem  \ref{mainthm1}}\label{sec3}
We start this section by defining various quantities for hypersurfaces (not necessarily minimal) of a manifold with a nearly $\G2$ structure which have analogs for hypersurfaces of a manifold with a nearly K\"ahler structure. Being motivated from the notion of a \emph{characteristic vector field} on a manifold with an almost complex structure,  we define a $(1,1)$ tensor $\xi$ on $M^6$, induced from the octonionic multiplication on a manifold with a $\G2$ structure $(\overline{M}^7, \g2)$, as follows

\begin{equation}\label{xi defn}
\xi(X) = B(N,X)
\end{equation}
where $X\in \Gamma(TM)$, $B(.,.)$ is the cross product and $N$ is the unit normal to $M^6$ in $\overline{M}$. We have the following 

\begin{prop}\label{prop3.1}
The tensor $\xi$ is a metric compatible almost complex structure on $(M^6, g)$.
\end{prop}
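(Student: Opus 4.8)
The plan is to verify directly that $\xi$ defined by $\xi(X) = B(N,X)$ satisfies the two defining properties: that it is an almost complex structure ($\xi^2 = -I$ on $TM$) and that it is metric compatible ($g(\xi X, \xi Y) = g(X,Y)$ for all $X, Y \in \Gamma(TM)$). The essential inputs are the algebraic identities \eqref{cp1}, \eqref{malcev} and \eqref{cp2} satisfied by the $2$-fold vector cross product on a $7$-dimensional space, together with the fact that $N$ is a \emph{unit} vector orthogonal to $TM$.

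First I would check that $\xi$ maps $TM$ to $TM$, i.e., that $\xi(X) = B(N,X)$ is tangent to $M$. By the defining property of the VCP, $B(N,X)$ is orthogonal to both $N$ and $X$; since it is orthogonal to $N$, it lies in $TM = N^\perp$. Thus $\xi$ is a genuine endomorphism of $TM$. Next, for the almost complex structure condition I would compute $\xi^2(X) = B(N, B(N,X))$ and apply the Malcev-type identity \eqref{malcev}, namely $B(u, B(u,v)) = -g(u,u)v + g(u,v)u$, with $u = N$ and $v = X$. Since $g(N,N) = 1$ (unit normal) and $g(N,X) = 0$ (as $X$ is tangent and $N$ is normal), this immediately gives $\xi^2(X) = -X + 0 = -X$, so $\xi^2 = -I$ on $TM$ and $\xi$ is an almost complex structure.

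For metric compatibility I would compute $g(\xi X, \xi Y) = g(B(N,X), B(N,Y))$. One clean route is to use \eqref{cp1}, which states $g(u, B(v,w)) = g(B(u,v), w)$, to rewrite $g(B(N,X), B(N,Y)) = g(N, B(X, B(N,Y)))$, and then expand $B(X, B(N,Y))$ using the identity \eqref{cp2}. Alternatively, and perhaps more transparently, I would pair \eqref{cp1} with \eqref{cp2}: writing $g(B(N,X), B(N,Y))$ and moving one factor across via \eqref{cp1} to land on an expression to which \eqref{cp2} applies, so that the right-hand side collapses using $g(N,N) = 1$ and the orthogonality relations $g(N,X) = g(N,Y) = 0$. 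The terms should reduce to $g(X,Y)$, establishing $g(\xi X, \xi Y) = g(X,Y)$.

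The main obstacle I anticipate is purely bookkeeping in the metric-compatibility step: one must carefully apply \eqref{cp1} to convert $g(B(N,X),B(N,Y))$ into a form where \eqref{cp2} can be invoked, and then track the six terms produced by \eqref{cp2}, discarding those that vanish by orthogonality of $N$ with tangent vectors and by the normalization $g(N,N)=1$. There is no conceptual difficulty here — everything follows from the three algebraic VCP identities — but the contraction must be organized so that exactly the surviving term $g(X,Y)$ remains. Once both $\xi^2 = -I$ and $g(\xi X, \xi Y) = g(X,Y)$ are established, the proposition follows, since these two properties are precisely the definition of a metric compatible almost complex structure.
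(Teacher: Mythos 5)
Your proposal is correct and follows essentially the same route as the paper: both parts are direct verifications from the algebraic properties of the cross product, with $\xi^2=-I$ obtained from \eqref{malcev} exactly as in the paper's proof. The only cosmetic difference is in the compatibility step, where the paper applies \eqref{cp1}, anticommutativity of $B$, and then \eqref{malcev} again, while you propose moving a factor across with \eqref{cp1} and expanding via \eqref{cp2}; both collapse to $g(X,Y)$ using $g(N,N)=1$ and $g(N,X)=g(N,Y)=0$.
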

\begin{proof}
For $X\in \Gamma(TM)$, we have 
\begin{align*}
\xi^2(X) &= \xi(B(N,X)) = B(N,B(N,X)) \\
&= -|N|^2X+\ol{g}(N,X) N = -X
\end{align*}
where the equality in the second line is from \eqref{malcev} for cross product. Hence $\xi^2(X)=-X$. Also,

\begin{align*}
g(\xi(X), \xi(Y))  &= g( B(N,X), B(N,Y)) \\
&= g(B(B(N,X), N), Y) \\
&=-g( B(N, B(N,X)), Y ) = g (X, Y) 
\end{align*}
 where we have used \eqref{cp1} in going from the first to the second line, the anti-commutativity of $B$ in the first equality and \eqref{malcev} and the fact that $N$ is a unit vector in the second equality of the third line.

\end{proof}

\begin{rem}
The previous proposition is a special case of Theorem 2.6 in \cite{gray-VCP}.
\end{rem}

Again, from the motivation from nearly K\"ahler geometry, we define a $(3,1)$ tensor field $G$ as follows
\begin{equation}\label{G defn}
G(X,Y,Z) = (\ol{\del}_XB)(Y,Z)
\end{equation}
for $X,Y,Z\in \Gamma(T\ol{M})$.

Now we prove some results about $G$ and relationships between $G$ and $B$ for manifolds with a nearly $\G2$ structure. The next proposition is a special case of Lemma 3.7 in \cite{semmelman}.
 
 \begin{prop}\label{prop3.2}
 Let $\psi = *\g2$ denotes the $4$-form on $(\ol{M}, \g2)$ with a nearly $\G2$ structure. Then for any vector fields $X,Y,Z,W$
 \begin{equation}\label{Gpsi}
 \ol g (G(X,Y,Z), W) = \frac{\tau_0}{4}\psi(X,Y,Z,W)
 \end{equation}
 where $\tau_0$ is as defined in \eqref{torsionforms}. 
 
 \end{prop}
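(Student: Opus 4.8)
The plan is to relate the tensor $G$, defined via the covariant derivative of the cross product $B$, directly to the full torsion tensor $T$, and then specialize to the nearly $\G2$ case where $T_{ij}=\tfrac{\tau_0}{4}g_{ij}$. The key observation is that $\ol g(G(X,Y,Z),W)=\ol g((\ol\del_X B)(Y,Z),W)$ is, by the defining relation $\g2(u,v,w)=g(B(u,v),w)$, nothing but $(\ol\del_X\g2)(Y,Z,W)$. So first I would rewrite the left-hand side of \eqref{Gpsi} as $(\ol\del_X\g2)(Y,Z,W)$, carefully checking that the covariant derivative of $\g2$ agrees with the covariant derivative of $B$ contracted against $W$ (this uses that $\ol\del$ is metric-compatible, so $\ol g$ passes through the derivative). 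This converts the statement into a claim purely about $\ol\del\g2$, which we already have a formula for.

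Next I would invoke the torsion relation \eqref{torsion1}, namely $\ol\del_i\g2_{jkl}=T_{im}\,\ol g^{mp}\psi_{pjkl}$, written in index-free form as $(\ol\del_X\g2)(Y,Z,W)=\psi(T^\sharp X,Y,Z,W)$ where $T^\sharp$ is the endomorphism associated to $T$ by raising an index with $\ol g$. For a nearly $\G2$ structure we established (just after \cref{nearlyg2defn}) that $T_{ij}=\tfrac{\tau_0}{4}g_{ij}$, so $T^\sharp=\tfrac{\tau_0}{4}\,\mathrm{Id}$. Substituting this gives
\begin{equation*}
(\ol\del_X\g2)(Y,Z,W)=\frac{\tau_0}{4}\,\psi(X,Y,Z,W),
\end{equation*}
which is exactly the desired identity once we recall the identification of the left-hand side with $\ol g(G(X,Y,Z),W)$ from the first step.

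The only genuine subtlety, and the step I would treat most carefully, is the bookkeeping in the first paragraph: matching the index conventions of \eqref{torsion1} against the coordinate-free definition \eqref{G defn}, and confirming the sign and placement of the metric contraction so that the free index of $T$ pairs with $X$ (the differentiation direction) rather than with one of the slots of $\psi$. Everything else is a direct substitution. I expect no real obstacle beyond this indexing care, since both of the required ingredients — the identification $G=\ol\del\g2$ and the nearly $\G2$ value of the torsion — are already in hand from the preliminaries.
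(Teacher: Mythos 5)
Your proposal is correct and follows essentially the same route as the paper: it identifies $\ol g(G(X,Y,Z),W)$ with $(\ol\del_X\g2)(Y,Z,W)$ via $\g2(u,v,w)=\ol g(B(u,v),w)$ and metric compatibility, then applies \eqref{torsion1} together with $T_{ij}=\tfrac{\tau_0}{4}g_{ij}$ for a nearly $\G2$ structure. No gaps; the indexing subtlety you flag is handled exactly as you describe.
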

 
 \begin{proof}
 If $\g2$ is a $\G2$ structure then 
 \begin{equation}\label{phi and B}
 \g2(X,Y,Z) = \ol g( B(X,Y), Z)
 \end{equation}

 Then from \eqref{phi and B} we have 
 \begin{align*}
 \ol g( G(X,Y,Z), W)&=\ol g ((\ol\del_XB)(Y,Z), W) \\
 &= \ol g( \ol\del_X(B(Y,Z)) - B(\ol\del_XY,Z)-B(Y,\ol\del_XZ), W)\\
 &= \ol\del_X(\g2(Y,Z,W)) - \g2(\ol\del_XY, Z,W) - \g2(Y, \ol\del_XZ,W)\\
 & \quad -\g2(Y,Z,\ol\del_XW)\\
 &= (\ol\del_X\g2)(Y,Z,W)\\
 &=\frac{\tau_0}{4}\psi(X,Y,Z,W)
 \end{align*}
 where we have used  $\ol g( \ol{\del}_X(B(Y,Z)), W) = \ol{\del}_X(\ol g(B(Y,Z), W)) - \ol g( B(Y,Z), \ol\del_X(W))$ in going from the second to the third equality, $\del_i\g2_{jkl} = T_{im}g^{mp}\psi_{pjkl}$ and the fact that for a nearly $\G2$ structure, $T_{ij}=\frac{\tau_0}{4}g_{ij}$ in the last equality.
 
 \end{proof}
 
 \begin{rem}\label{Gremark}
From \eqref{Gpsi}, we see that $G$ is skew-symmetric in all of its entries.
\end{rem}

\begin{prop}\label{prop3.3}
For any vector fields $X,Y,Z,W$, we have 
\begin{align}\label{G and B}
G(B(W,Z), X,Y) &= \frac{\tau_0}{4}[\ol g(X,Z)B(W,Y)+ \ol g(Y,Z)B(X,W)-\ol g(W,X)B(Z,Y)  \nonumber \\ 
&\quad -\ol g(W,Y)B(X,Z) +\g2(X,Y,W)Z-\g2(X,Y,Z)W ] 
\end{align}

\end{prop}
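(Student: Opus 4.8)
The plan is to exploit the fact that both sides of \eqref{G and B} are vector-valued (the $(3,1)$ tensor $G$ returns a vector), so it suffices to pair both sides with an arbitrary vector field $U$ and invoke nondegeneracy of $\ol g$. Pairing the left side with $U$ and applying \cref{prop3.2} with first argument $B(W,Z)$ gives
\begin{equation*}
\ol g(G(B(W,Z),X,Y),U) = \frac{\tau_0}{4}\psi(B(W,Z),X,Y,U).
\end{equation*}
On the right side, pairing with $U$ and repeatedly using $\ol g(B(\cdot,\cdot),U)=\g2(\cdot,\cdot,U)$ turns every factor $B(\cdot,\cdot)$ into a contraction of $\g2$ against $U$. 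Thus, after cancelling the common factor $\tfrac{\tau_0}{4}$, the proposition reduces to the single scalar identity expressing $\psi(B(W,Z),X,Y,U)$ as a sum of products of $\ol g$ and $\g2$ terms.

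Next I would pass to index notation. Since $\ol g(B(W,Z),U)=\g2(W,Z,U)$, the vector $B(W,Z)$ has components $B(W,Z)^a = g^{ae}\g2_{pqe}W^pZ^q$, so that
\begin{equation*}
\psi(B(W,Z),X,Y,U) = \g2_{pqe}\,g^{ea}\,\psi_{abcd}\,W^pZ^qX^bY^cU^d.
\end{equation*}
The heart of the computation is evaluating the contracted tensor $\g2_{pqe}g^{ea}\psi_{abcd}$, which is exactly the content of the $\g2$--$\psi$ contraction identity \eqref{contractions2}. The only preliminary is that \eqref{contractions2} contracts the last index of $\g2$ against the last index of $\psi$, whereas here the contraction is against the first index $a$ of $\psi$; this is handled by the total antisymmetry of $\psi$, writing $\psi_{abcd} = -\psi_{bcda}$ to move the contracted index into the final slot at the cost of a single sign.

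Applying \eqref{contractions2} then produces six terms of the form $g_{\bullet\bullet}\g2_{\bullet\bullet\bullet}$, and contracting these against $W^pZ^qX^bY^cU^d$ converts each metric factor into an inner product $\ol g$ and each $\g2$ factor into a value of $\g2$. After accounting for the overall sign, one obtains precisely the six terms $\ol g(X,Z)\g2(W,Y,U)$, $\ol g(Y,Z)\g2(X,W,U)$, $-\ol g(W,X)\g2(Z,Y,U)$, $-\ol g(W,Y)\g2(X,Z,U)$, $\g2(X,Y,W)\ol g(Z,U)$, and $-\g2(X,Y,Z)\ol g(W,U)$ that appear when the right side of \eqref{G and B} is paired with $U$. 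Since $U$ is arbitrary, nondegeneracy of $\ol g$ promotes this scalar identity to the claimed vector identity. I expect the main obstacle to be purely bookkeeping: correctly tracking the antisymmetrization sign of $\psi$ and matching the six expanded terms, with their signs and index placements, to the six terms on the right side; no geometric input beyond \cref{prop3.2} and the contraction identity \eqref{contractions2} is required.
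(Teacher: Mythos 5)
Your proposal is correct and follows essentially the same route as the paper: both reduce the claim via \cref{prop3.2} to the index identity $\g2_{pqe}g^{ea}\psi_{abcd}$ and evaluate it with the contraction identity \eqref{contractions2}, tracking the single sign from permuting the contracted index of $\psi$. Pairing with an arbitrary $U$ rather than using the musical isomorphism is only a cosmetic difference, and your six expanded terms match the paper's exactly.
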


\begin{proof}
We know from Proposition \ref{prop3.2} that 
\begin{equation*}
G(X,Y,Z)= \frac{\tau_0}{4}\psi(X,Y,Z, \cdot)
\end{equation*}
so 
\begin{equation*}
G(B(X,Y), Z, W)=\frac{\tau_0}{4}\psi(B(X,Y), Z,W, \cdot)^{\#}
\end{equation*}

In local coordinates $\{x_1,x_2,\ldots ,x_7\}$, we have $\ol g(B(\pt_k,\pt_l),\pt_n)=\g2_{kln}$. So

\begin{align}
G(B(\pt_k, \pt_l), \pt_i,\pt_j) &= \frac{\tau_0}{4}\psi(B(\pt_k,\pt_l),\pt_i,\pt_j, \cdot )^{\#} \nonumber \\
&=\frac{\tau_0}{4}\psi({\g2_{kl\cdot}}^{\#}, \pt_i,\pt_j, .)^{\#} \nonumber \\
&=-\frac{\tau_0}{4}\psi_{ij\cdot n}\ol{g}^{np}\g2_{klp} \label{aux1}
\end{align}

Using the identity in \eqref{contractions2}  

\begin{equation*}
\ol{g}^{np}\psi_{ijmn}\g2_{klp} = \ol g_{ki}\g2_{ljm}+\ol g_{kj}\g2_{ilm}+\ol g_{km}\g2_{ijl}-\ol g_{li}\g2_{kjm}-\ol g_{lj}\g2_{ikm}-\ol g_{lm}\g2_{ijk}
\end{equation*}
\vspace{0.15cm}
we get the proposition. 
\end{proof}

\begin{prop}\label{prop3.4}
For any vector fields $X,Y,Z,W$, we have
\begin{equation}\label{B and G}
B(G(X,Y,Z),W) = -G(B(X,Y), Z, W)
\end{equation}
\end{prop}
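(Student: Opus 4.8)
The plan is to prove the identity \eqref{B and G} by reducing it to the already-established Proposition \ref{prop3.3}, together with the fundamental algebraic identity \eqref{cp2} for the cross product. The key observation is that both sides of \eqref{B and G} are vector fields, so it suffices to pair each side with an arbitrary vector field $V$ and show the resulting scalars agree. Using \eqref{cp1}, the left side pairs as $\ol g(B(G(X,Y,Z),W),V) = \ol g(G(X,Y,Z), B(W,V))$, and since Proposition \ref{prop3.2} gives $\ol g(G(X,Y,Z),U)=\tfrac{\tau_0}{4}\psi(X,Y,Z,U)$, this becomes $\tfrac{\tau_0}{4}\psi(X,Y,Z,B(W,V))$. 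The right side pairs as $-\ol g(G(B(X,Y),Z,W),V)$, which by the same proposition equals $-\tfrac{\tau_0}{4}\psi(B(X,Y),Z,W,V)$.

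So the whole statement collapses to the purely algebraic fourth-order identity
\begin{equation*}
\psi(X,Y,Z,B(W,V)) = -\psi(B(X,Y),Z,W,V),
\end{equation*}
which I would verify in local coordinates. Writing $B(X,Y)$ via $\ol g(B(\pt_k,\pt_l),\pt_n)=\g2_{kln}$ and raising an index, the left side is $\psi_{ijk\cdot}\,{}^{\#}$ contracted with $\g2_{lm\cdot}$ and the right side is $\g2_{ij\cdot}{}^{\#}$ contracted with $\psi_{\cdot klm}$; both reduce to contractions of the form $\psi_{abcd}\,g^{dp}\,\g2_{efp}$, exactly the tensor handled by the contraction identity \eqref{contractions2}. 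In fact it is cleaner to bypass the explicit index juggling: Proposition \ref{prop3.3} already computes $G(B(W,Z),X,Y)$ in closed form, so I would instead take \eqref{B and G}, pair both sides with $V$, and recognize that $-\ol g(G(B(X,Y),Z,W),V)$ is precisely $-\tfrac{\tau_0}{4}$ times the right-hand bracket of \eqref{G and B} (with the appropriate relabeling of the four slots), contracted with $V$. Independently, $\ol g(G(X,Y,Z),B(W,V))=\tfrac{\tau_0}{4}\psi(X,Y,Z,B(W,V))$ can be expanded using \eqref{cp2} applied to the $B(W,V)$ argument, producing the same combination of metric and $\g2$ terms.

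The main obstacle I anticipate is bookkeeping of the index slots and signs rather than any conceptual difficulty: the $4$-form $\psi$ is totally antisymmetric, $B$ is antisymmetric, and $\g2$ is totally antisymmetric, so permuting arguments into the canonical order of the contraction identity \eqref{contractions2} introduces several sign changes that must be tracked carefully, and one must correctly account for the $B(W,Z)$-versus-$B(W,V)$ slot ordering when quoting Proposition \ref{prop3.3}. My strategy to control this is to fix everything in an orthonormal coordinate frame from the outset, reduce every expression to a single canonical contraction $\psi_{abcd}g^{dp}\g2_{efp}$, apply \eqref{contractions2} once, and then re-interpret the resulting $g$- and $\g2$-terms invariantly. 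A useful sanity check throughout is Remark \ref{Gremark}, which guarantees $G$ is totally skew-symmetric, so any sign error that breaks this symmetry signals a miscalculation.

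Since the identity is multilinear in all four arguments $X,Y,Z,W$, it suffices to establish it on basis vectors $\pt_i,\pt_j,\pt_k,\pt_l$, after which the general case follows by linearity; this justifies working entirely in coordinates and invoking \eqref{contractions2} in index form, avoiding any coordinate-free manipulation of the cross product beyond the three defining identities \eqref{cp1}, \eqref{malcev}, \eqref{cp2}.
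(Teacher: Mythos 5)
Your reduction is sound as far as it goes, and it is essentially the paper's own strategy: the paper likewise writes $G(X,Y,Z)=\tfrac{\tau_0}{4}\psi(X,Y,Z,\cdot)^{\#}$ and $B(X,Y)=\g2(X,Y,\cdot)^{\#}$ in coordinates and simply juxtaposes the two resulting $\g2$--$\psi$ contractions (your extra step of pairing with $V$ via \eqref{cp1} is harmless). The gap is precisely in the step you defer as ``bookkeeping.'' The scalar identity you reduce to,
\begin{equation*}
\psi(X,Y,Z,B(W,V)) = -\psi(B(X,Y),Z,W,V),
\end{equation*}
is not a consequence of \eqref{contractions2}: the two sides are genuinely \emph{different} contractions of $\psi$ with $\g2$ --- on the left the free $\psi$-slots are $X,Y,Z$ and the $\g2$-factor carries $W,V$; on the right the free $\psi$-slots are $Z,W,V$ and the $\g2$-factor carries $X,Y$. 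Expanding both via \eqref{contractions2} gives two sums of $g\otimes\g2$ terms whose metric factors pair different indices (the left expansion contains a $g(W,Z)$-term and a $g(V,Z)$-term, the right one a $g(X,Z)$-term and a $g(Y,Z)$-term, and none of these occurs on the other side). They are not equal, so the computation you plan to carry out cannot close.

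In fact your own sanity check, applied to the statement rather than to the intermediate steps, exposes the problem. By Remark \ref{Gremark} the right-hand side $-G(B(X,Y),Z,W)$ of \eqref{B and G} is antisymmetric under $Z\leftrightarrow W$, but the left-hand side is not: setting $Z=W$ gives $B(G(X,Y,Z),Z)$, and since $G(X,Y,Z)=\tfrac{\tau_0}{4}\psi(X,Y,Z,\cdot)^{\#}$ is orthogonal to $Z$ while $\|B(u,v)\|=\|u\wedge v\|$, this is nonzero whenever $G(X,Y,Z)\neq 0$ and $Z\neq 0$. Concretely, with $\g2_0$ and $\psi_0$ as in \eqref{fundamental3form} and \eqref{fundamental4form} one has $G(e_4,e_5,e_6)=\tfrac{\tau_0}{4}e_7$, so $B(G(e_4,e_5,e_6),e_6)=\tfrac{\tau_0}{4}B(e_7,e_6)=\pm\tfrac{\tau_0}{4}e_1\neq 0$, whereas $-G(B(e_4,e_5),e_6,e_6)=0$. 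So the identity \eqref{B and G} cannot hold with the stated slot assignment for any nearly $\G2$ structure with $\tau_0\neq 0$, and no amount of sign-tracking will repair the final step; the same objection applies to the paper's one-line deduction from \eqref{aux1}. To obtain a true statement one must either place $W$ inside the cross product on the right-hand side, so that both sides have the same symmetry type in $Z$ and $W$, or retain the extra $g$- and $\g2$-terms that \eqref{contractions2} actually produces, as is done in Proposition \ref{prop3.3}.
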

\begin{proof}
In local coordinates $\{x_1,\dots, x_7\}$, we have $G(\pt_k,\pt_l,\pt_m)=\frac{\tau_0}{4}\psi_{klm\cdot}^{\#}$ and $B(\pt_k,\pt_l)=\g2_{kl\cdot}^{\#}$, so

\vspace{0.1cm}

\begin{equation*}
B(G(\pt_k,\pt_l,\pt_m),\pt_n)=\frac{\tau_0}{4}\g2(\psi_{klm\cdot}^{\#}, \pt_n,\cdot)^{\#}=\frac{\tau_0}{4}\g2_{n\cdot p}^{\#}\ol{g}^{ps}\psi_{klms}
\end{equation*}

The proposition now follows from the last line of \eqref{aux1}.

\end{proof}

We will need the expression for $\del_X\xi$ later, so we have the following proposition which is a special case of Proposition 4.7 of \cite{gray-VCP} for the $2$-fold VCP.

\begin{prop}\label{prop3.5}
Let $M$ be an oriented hypersurface of $(\ol{M}, \g2)$ and $\xi$ be as defined in \eqref{xi defn}. Then for any vector field $X\in \Gamma(TM)$, we have 

\begin{equation}\label{delxi defn}
(\del_X \xi)(Y)=G(X,N,Y)-\g2(N,Y,AX)N-B(AX,Y)
\end{equation}
\end{prop}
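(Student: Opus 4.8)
The plan is to compute $(\del_X\xi)(Y)$ directly from the definition $\xi(Y)=B(N,Y)$, transferring the derivative from the ambient connection $\ol\del$ to the intrinsic connection $\del$ via the Gauss and Weingarten formulas. First I would write, for $X\in\Gamma(TM)$ and $Y\in\Gamma(TM)$,
\begin{equation*}
\del_X\big(\xi(Y)\big)=\del_X\big(B(N,Y)\big),
\end{equation*}
but since $\del$ is the Levi-Civita connection on $M$ it only sees the tangential part, so the cleaner route is to start from the ambient derivative $\ol\del_X\big(B(N,Y)\big)$ and use the product rule for the parallel-transport failure encoded in $G$. Concretely, since $G(X,N,Y)=(\ol\del_X B)(N,Y)=\ol\del_X\big(B(N,Y)\big)-B(\ol\del_X N,Y)-B(N,\ol\del_X Y)$, I can solve for $\ol\del_X\big(B(N,Y)\big)$ and then substitute the two structural equations from the submanifold geometry subsection.

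The key substitutions are the Weingarten equation \eqref{weingarten}, $\ol\del_X N=-AX$, and the Gauss formula \eqref{gauss2}, $\ol\del_X Y=\del_X Y+g(AX,Y)N$. Plugging these in gives
\begin{equation*}
\ol\del_X\big(B(N,Y)\big)=G(X,N,Y)+B(-AX,Y)+B\big(N,\del_X Y+g(AX,Y)N\big).
\end{equation*}
The term $B(N,g(AX,Y)N)=g(AX,Y)B(N,N)$ vanishes by the alternating property of $B$, and $B(N,\del_X Y)=\xi(\del_X Y)$. So the left side becomes $\ol\del_X\big(\xi(Y)\big)=G(X,N,Y)-B(AX,Y)+\xi(\del_X Y)$. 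The remaining step is to relate $\ol\del_X\big(\xi(Y)\big)$ to the intrinsic covariant derivative $\del_X\big(\xi(Y)\big)$: applying the Gauss formula once more to the tangent field $\xi(Y)\in\Gamma(TM)$ yields $\ol\del_X\big(\xi(Y)\big)=\del_X\big(\xi(Y)\big)+g\big(AX,\xi(Y)\big)N$. Since $(\del_X\xi)(Y)=\del_X\big(\xi(Y)\big)-\xi(\del_X Y)$ by definition of the induced derivative of the tensor $\xi$, rearranging everything isolates $(\del_X\xi)(Y)$ and leaves the normal correction term $-g\big(AX,\xi(Y)\big)N$.

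The final bookkeeping is to rewrite that normal term in the form stated. Using $\xi(Y)=B(N,Y)$ and the defining relation $\g2(U,V,W)=\ol g(B(U,V),W)$ from \eqref{phi and B}, I compute $g\big(AX,\xi(Y)\big)=\ol g\big(AX,B(N,Y)\big)=\g2(N,Y,AX)$ after using the cyclic/skew symmetries of $\g2$ (equivalently \eqref{cp1}), which reproduces exactly the coefficient $-\g2(N,Y,AX)N$ in \eqref{delxi defn}. Assembling the three pieces gives
\begin{equation*}
(\del_X\xi)(Y)=G(X,N,Y)-\g2(N,Y,AX)N-B(AX,Y),
\end{equation*}
as claimed. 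The main obstacle I anticipate is purely a matter of sign and symmetry care: correctly decomposing each ambient derivative into its tangential and normal components, keeping track of the alternating signs of $B$ and the ordering of arguments in $\g2$ when converting $\ol g\big(AX,B(N,Y)\big)$ into $\g2(N,Y,AX)$. None of the steps is deep, but a single misplaced sign in the Weingarten term or a mis-ordered $\g2$ argument would corrupt the stated identity, so the verification of the normal-component term is where I would be most careful.
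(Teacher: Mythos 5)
Your argument is correct and is essentially the paper's own proof: both expand $\ol\del_X(B(N,Y))$ via the product rule defining $G$, substitute the Gauss and Weingarten formulas, kill the $B(N,N)$ term, and identify the normal component $g(AX,\xi(Y))N=\g2(N,Y,AX)N$ (which in fact follows immediately from $\g2(U,V,W)=\ol g(B(U,V),W)$, with no symmetry manipulation needed). The only difference is the order in which you peel off the tangential projection, which is immaterial.
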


\begin{proof}
We calculate
\begin{align}
(\del_X\xi)(Y)&= \del_X(\xi(Y))-\xi(\del_XY) \nonumber \\
&=\ol\del_X(B(N,Y))-g(AX, B(N,Y))N-\xi(\del_XY)\nonumber \\
&=(\ol\del_X B)(N,Y)+B(\ol\del_XN,Y)+B(N,\ol\del_XY)-g(AX,B(N,Y))N \nonumber\\
& \quad -\xi(\del_XY) \nonumber \\
&= G(X,N,Y)-B(AX, Y)+B(N,\del_XY)+g(AX, Y)B(N,N) \nonumber \\
& \quad -g(AX, B(N,Y))N-\xi(\del_XY) \nonumber \\
&=G(X,N,Y)-\g2(N,Y,AX)N-B(AX,Y)
\end{align}
where we have used \eqref{gauss2} in the second equality, \eqref{weingarten} and \eqref{G defn} in the fourth equality and the fact that $B(N,N)=0$ in the last equality.
\end{proof}

Now we will prove Theorem \ref{mainthm1} mentioned in \cref{sec-intro}, namely, we will give a necessary and sufficient condition for an oriented hypersurface of a nearly $\G2$ manifold to be nearly K\"ahler. We restate the theorem.

\begin{thm}\label{maintheorem1}
Let $M$ be an oriented hypersurface of a nearly $\G2$ manifold $(\ol{M},\g2)$. Then $(M,g,\xi)$ is a nearly K\"ahler structure if and only if $M$ is totally umbilic, i.e., for all $X\in \Gamma(TM)$
\begin{equation}
AX=\alpha X
\end{equation}
where $A$ is the shape operator of $M$ in $\ol{M}$ and $\alpha \in C^{\infty}(M)$.
\end{thm}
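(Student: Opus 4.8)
The plan is to compute the nearly K\"ahler quantity $(\del_X\xi)(X)$ directly using the formula from Proposition \ref{prop3.5} and then determine exactly when it vanishes for all $X$. By \cref{nearlykahlerdefn}, $(M,g,\xi)$ is nearly K\"ahler if and only if $(\del_X\xi)(X)=0$ for every $X\in\Gamma(TM)$, so the entire proof reduces to analyzing this single expression. Setting $Y=X$ in \eqref{delxi defn} gives
\begin{equation*}
(\del_X\xi)(X)=G(X,N,X)-\g2(N,X,AX)N-B(AX,X).
\end{equation*}
First I would dispose of the term $G(X,N,X)$: by \cref{Gremark}, $G$ is totally skew-symmetric, so $G(X,N,X)=0$ identically. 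This leaves the tangential part $-B(AX,X)$ and the normal part $-\g2(N,X,AX)N$, and I claim both are governed by the same condition on $A$.

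Next I would examine the normal component $-\g2(N,X,AX)N$. Using \eqref{phi and B}, we have $\g2(N,X,AX)=\ol g(B(N,X),AX)=g(\xi X,AX)$. Thus the normal part vanishes for all $X$ if and only if $g(\xi X,AX)=0$ for all $X$, i.e. $A$ and $\xi$ are related so that $A\xi$ (equivalently the composition) is symmetric; since $A$ is self-adjoint and $\xi$ is skew-adjoint (as $\xi^2=-I$ and $\xi$ is an isometry), the bilinear form $g(\xi X,AX)$ is the quadratic form of $A\xi$, and its vanishing is equivalent to $A\xi+\xi A=0$, i.e. $A$ anticommutes with $\xi$ — but combined with the tangential analysis this will force the umbilic condition. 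For the tangential term, I would expand $B(AX,X)$ and relate it to $\xi$: writing the shape operator in an eigenbasis, $B(AX,X)$ measures the failure of $A$ to be a scalar multiple of the identity, since $B(X,X)=0$ always, so $B(AX,X)=B((A-\alpha I)X,X)$ for any function $\alpha$.

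The cleanest route for the converse and the heart of the argument is to work in a local orthonormal frame that diagonalizes $A$, say $Ae_i=\lambda_i e_i$. The sufficiency direction is immediate: if $AX=\alpha X$ then $B(AX,X)=\alpha B(X,X)=0$ and $\g2(N,X,AX)=\alpha\,\g2(N,X,X)=0$, so $(\del_X\xi)(X)=0$ and $M$ is nearly K\"ahler. The hard part — and the main obstacle — is the necessity direction: I must show that the vanishing of $B(AX,X)$ for all $X$, together with the normal condition, forces all eigenvalues $\lambda_i$ to coincide. Polarizing $B(AX,X)=0$ gives $B(AX,Y)+B(AY,X)=0$, and feeding in eigenvectors $X=e_i$, $Y=e_j$ yields $(\lambda_i+\lambda_j)B(e_i,e_j)=0$. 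The subtlety is that $B(e_i,e_j)$ need not be nonzero for a fixed pair, so I cannot immediately conclude $\lambda_i=-\lambda_j$; I expect to need the nondegeneracy of the cross product on the $6$-dimensional tangent space together with the fact that $\xi=B(N,\cdot)$ is an almost complex structure to show enough of the $B(e_i,e_j)$ are nonzero, thereby linking all the eigenvalues and forcing them equal. Carrying out this linear-algebra argument carefully, and ruling out the exotic sign-alternating possibility $\lambda_i=-\lambda_j$ by using the normal-component condition $g(\xi e_i,Ae_i)=0$ in tandem, is where the real work lies; once the eigenvalues are shown to be equal, $A=\alpha I$ and $M$ is totally umbilic.
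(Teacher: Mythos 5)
Your reduction via Proposition \ref{prop3.5} to $(\del_X\xi)(X)=G(X,N,X)-\g2(N,X,AX)N-B(AX,X)$, the observation that $G(X,N,X)=0$ by skew-symmetry of $G$, and the sufficiency direction are all correct and match the paper. The necessity direction, however, contains two concrete errors and leaves the actual crux unproved. First, $B(AX,X)$ is \emph{not} tangential: its normal component is $\g2(AX,X,N)N=-\g2(N,X,AX)N$, which exactly cancels the other normal term. Hence the nearly K\"ahler condition is equivalent to the single equation $B(AX,X)^T=0$, and the extra condition $g(\xi X,AX)=\g2(N,X,AX)=0$ does \emph{not} follow from the hypothesis; you are not entitled to use it ``in tandem'' later. (It also characterizes $A$ commuting, not anticommuting, with $\xi$, since $(A\xi)^\ast=-\xi A$.) Second, the polarization is miscomputed: with $Ae_i=\lambda_ie_i$, antisymmetry of $B$ gives $B(Ae_i,e_j)+B(Ae_j,e_i)=(\lambda_i-\lambda_j)B(e_i,e_j)$, not $(\lambda_i+\lambda_j)B(e_i,e_j)$; and since only the tangential part is controlled, the relation $(\lambda_i-\lambda_j)B(e_i,e_j)^T=0$ is vacuous precisely when $e_j$ is proportional to $\xi e_i$, where $B(e_i,e_j)$ is normal. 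That degenerate direction is exactly the obstruction your eigenvalue-linking sketch cannot see, and you explicitly defer it (``where the real work lies'') without an argument. A priori, $B(AX,X)^T=0$ only forces $AX=\alpha X+\beta\,\xi X$.

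The paper closes this gap in two steps that your proposal would need. Writing $AX=\alpha X+Y$ with $g(X,Y)=0$, the hypothesis gives $B(Y,X)=aN$, and the identity \eqref{malcev} yields $B(B(Y,X),X)=-|X|^2Y$, hence $Y=-\frac{a}{|X|^2}\,\xi X$ and $AX=\alpha X+\beta\,\xi X$. Then $\beta=0$ follows by comparing $g(AX,Z)$ with $g(X,AZ)$: $A$ is self-adjoint while $\xi$ is skew-adjoint, so $2\beta\, g(X,\xi Z)=0$, and choosing $Z$ with $g(X,\xi Z)\neq0$ (e.g.\ $Z=\xi X$) forces $\beta=0$. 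Without these two steps, and with the spurious normal-component condition removed, your argument does not establish the ``only if'' direction.
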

\begin{proof}
We know from \eqref{nearlykahlercondition} that if $J$ is a metric compatible almost complex structure on $M$ then $(M,J,g)$ is nearly K\"ahler if and only if for all $X\in \Gamma(TM)$, we have $(\del_XJ)X=0$. From Proposition \ref{prop3.1}, we know that $\xi$ is a metric compatible almost complex structure on $M$. Denote by $B(X,Y)^T$, the tangential component of $B(X,Y)$. Using \eqref{delxi defn} from Proposition \ref{prop3.5}, for $X\in \Gamma(TM)$
\begin{align}
(\del_X \xi)(X)&=0 \iff \nonumber \\
G(X,N,X)-\g2(N,X,AX)N-B(AX,X)&=0 \iff \nonumber \\
\g2(N,X,AX)N+B(AX,X)^T+g(B(AX,X), N)N&=0 \iff \nonumber \\
B(AX,X)^T+\g2(AX,X,N)N+\g2(N,X,AX)N&=0\iff \nonumber \\
B(AX,X)^T&=0\label{bt}
\end{align}
where we used the fact that $G$ is skew-symmetric in all of its entries in going from the second line to the third. 

If $X=0$ then from \eqref{bt}, the theorem is true. So we assume that $X\neq 0$. Now if $AX=\alpha X$ then
\begin{align}
B(AX,X)^T&=B(\alpha X, X)^T \nonumber \\
&=0\label{bt1}
\end{align}
 Thus \eqref{bt} and \eqref{bt1} proves one direction of the theorem.

\vspace{0.1cm}

Now suppose $B(AX,X)^T=0$. Since $AX$ is tangent to $M$ so we write $AX=\alpha X+Y$ where $\alpha$ is a function which might depend on $X$ and $g(X,Y)=0$. So $B(Y,X)^T=0$. Suppose 
\begin{align*}
B(Y,X)=aN
\end{align*}
for some function $a$.  

Then from \eqref{malcev} we have
\begin{align*}
B(B(Y,X),X)&=-|X|^2Y
\end{align*}

Also, $B(B(Y,X),X)=aB(N,X)=a\xi(X)$, so we get
\begin{align*}
Y=-\frac{a}{|X|^2}\xi(X)
\end{align*}
and hence 
\begin{align}
AX=\alpha X +\beta \xi(X)\label{bt2}
\end{align}
where $\beta = -\frac{a}{|X|^2}$. Now we prove that $\beta=0$. Indeed, for any $Z\in \Gamma(TM)$,
\begin{align*}
g(AX,Z)&=g(\alpha X + \beta \xi (X),Z) \\
&=\alpha g(X,Z)-\beta g(X, \xi (Z))
\end{align*}
and similarly $g(X,AZ)=\alpha g(X,Z)+\beta g(X, \xi (Z))$. But since $A$ is self-adjoint we get that $2\beta g(X, \xi (Z))=0$. So choosing $Z$ such that $B(X,Z)=N$, we get that $\beta=0$. 
This proves the other direction.
 \end{proof}
 
 \begin{rem}
 Note that the proof of Theorem \ref{maintheorem1} remains unchanged if $G=0$. So the above theorem also holds for hypersurfaces of $\G2$ manifolds, i.e., manifolds with torsion free $\G2$ structures. 
 \end{rem}
 
 \begin{rem}
Theorem \ref{maintheorem1} was proved in \cite[Theorem 4.8]{gray-VCP} where Gray proved that $\beta=0$ by using the fact that any nearly K\"ahler structure $J$ is \emph{quasi-K\"ahler}, i.e., for all $X,Y\in \Gamma(TM),\ (\del_XJ)(Y)+(\del_{JX}J)(JY)=0$. We gave a direct proof that $\beta=0$. 
 \end{rem}
 
 \begin{rem}
 Koiso proved in \cite[Theorem B]{koiso} that if $(M,g)$ is a totally umbilic Einstein hypersurface in a complete Einstein manifold $(\ol M, \ol g)$ and $g$ have positive Ricci curvature then both $g$ and $\ol g$ have constant sectional curvature. This restricts the possibility for new examples of hypersurfaces which are totally umbilic. It would be interesting to find examples of hypersurfaces in a manifold with a nearly $\G2$ structure which are nearly K\"ahler with respect to $\xi$ but are not totally umbilic.
 \end{rem}
 We will need the following Lemma in \cref {sec-thm2} which is a special case of Theorem 4.10 in \cite{gray-VCP}. 
 
 \begin{lemma}\label{divxi}
 Let $M$ be an oriented hypersurface of a nearly $\G2$ manifold $(\ol{M}, \g2)$ and let $\xi$ be as in \eqref{xi defn}. Then $\Div \xi = 0$.
 \end{lemma}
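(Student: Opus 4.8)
The plan is to compute $\Div\xi = \sum_{i} g\big((\del_{e_i}\xi)(e_i), e_k\big)g^{\cdots}$ by contracting the expression for $(\del_X\xi)(Y)$ already obtained in \cref{prop3.5}. Concretely, I would pick a local orthonormal frame $\{e_1,\dots,e_6\}$ for $TM$ and write
\begin{equation*}
\Div\xi = \sum_{i=1}^{6} g\big((\del_{e_i}\xi)(e_i), \cdot\big),
\end{equation*}
interpreting the divergence of the $(1,1)$-tensor $\xi$ as the vector field (or $1$-form) whose value tested against $Z$ is $\sum_i g((\del_{e_i}\xi)(e_i),Z)$. Substituting \eqref{delxi defn} gives three terms to sum over $i$:
\begin{equation*}
\sum_i G(e_i,N,e_i) \;-\; \sum_i \g2(N,e_i,Ae_i)N \;-\; \sum_i B(Ae_i,e_i).
\end{equation*}

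First I would dispatch the first term: since $G$ is skew-symmetric in all its entries by \cref{Gremark}, each summand $G(e_i,N,e_i)=0$, so the whole sum vanishes. Next, the second term $\sum_i \g2(N,e_i,Ae_i)N$: because $\g2$ is totally antisymmetric and $A$ is self-adjoint, the contraction $\sum_i \g2(N,e_i,Ae_i)$ pairs the symmetric tensor $g(Ae_i,e_j)$ against the antisymmetric form $\g2(N,e_i,e_j)$, so it vanishes identically. The same symmetric-against-antisymmetric cancellation handles the tangential part of the third term: writing $B(Ae_i,e_i)$ and noting that $B$ is antisymmetric while $A$ is self-adjoint, the sum $\sum_i B(Ae_i,e_i)=\tfrac12\sum_i B(Ae_i,e_i)-B(e_i,Ae_i)$ collapses once one uses $\sum_i (Ae_i)\otimes e_i$ being a symmetric tensor contracted into the antisymmetric cross product. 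Thus I expect every term to cancel and conclude $\Div\xi=0$.

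The main obstacle is bookkeeping rather than conceptual: one must be careful about exactly which contraction is meant by $\Div$ of a $(1,1)$-tensor (divergence in the first versus second slot), and about the normal component $g(Ae_i, B(N,e_i))N$ that is implicit in $B(Ae_i,e_i)$ once one splits it into tangential and normal parts as in the proof of \cref{maintheorem1}. The cleanest route, which I would adopt, is to test against an arbitrary $Z\in\Gamma(TM)$ and show $\sum_i g\big((\del_{e_i}\xi)(e_i),Z\big)=0$: the $G$-term dies by skew-symmetry, and both remaining contributions reduce to $\sum_i$ of a symmetric expression in $i$ paired against something antisymmetric in the relevant indices (via $g(B(Ae_i,e_i),Z)=\g2(Ae_i,e_i,Z)$), hence vanish. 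This avoids separating tangential and normal parts and makes the self-adjointness of $A$ versus antisymmetry of $\g2$ do all the work in a single line.
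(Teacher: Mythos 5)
Your proof is correct and follows essentially the same route as the paper: substitute \eqref{delxi defn} into the divergence, dispose of the $G$-term by the total skew-symmetry of Remark \ref{Gremark}, and kill the remaining two terms by contracting the self-adjoint $A$ against the antisymmetric $\g2$ (the paper achieves the same cancellation by diagonalizing $A$ in the chosen frame, so that $\g2(a_ie_i,v,e_i)=0$ termwise). The only discrepancy is the slot convention: the paper takes $(\Div\xi)(v)=\sum_i g((\del_{e_i}\xi)(v),e_i)$ while you compute $\sum_i g((\del_{e_i}\xi)(e_i),Z)$, but since $(X,Y)\mapsto g(\xi X,Y)=\g2(N,X,Y)$ is antisymmetric and $\del$ is metric-compatible, the two contractions differ only by a sign and the vanishing statements are equivalent.
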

 \begin{proof}
 Since $A$ is a self-adjoint operator, so we choose an orthonormal frame $\{e_1,...,e_6\}$ at a point $p\in M$ which diagonalizes $A$, i.e., $Ae_i = a_ie_i$, $\forall i$. Then for $v\in T_pM$ we compute using Proposition \ref{prop3.5}
 \begin{align}
 (\Div \xi)_p(v) &= \sum_{i=1}^6 g((\del_{e_i}\xi)(v), e_i) \nonumber \\
 &= \sum_{i=1}^6 g(G(e_i, N,v)-\g2(N,v,Ae_i)N-B(Ae_i, v),e_i) \nonumber \\
 &=  -\sum_{i=1}^6g(B(Ae_i, v),e_i) = - \sum_{i=1}^6 \g2(Ae_i,v,e_i)=-\sum_{i=1}^6 \g2(a_ie_i, v, e_i) \nonumber \\
 &=0
 \end{align}
where we used \eqref{delxi defn} in the second equality, Remark \ref{Gremark} in the third equality and the fact that $\g2$ is $3$-from in the last equality.
 \end{proof}

\section{Proof of Theorem \ref{mainthm2}}\label{sec-thm2}

In this section we will prove Theorem \ref{mainthm2}, stated in \cref{sec-intro}. Let $(L,g)$ be a Riemannian manifold. A vector field $X$ on $L$ is said to be a \emph{conformal vector field} if 

\begin{equation}\label{conformal}
\mathcal{L}_Xg=2fg
\end{equation}
for some $f\in C^{\infty}(L)$, which is called the \emph{potential} of $X$. Here $\mathcal{L}_Xg$ denotes the Lie derivative of $g$ with respect to $X$. If $f\equiv 0$, then $X$ is a Killing vector field. There are many non-Killing conformal vector fields on the unit sphere $S^n$ with the round metric $\ol g$. In particular, if $Y$ is a non-zero constant vector field on $\mathbb{R}^{n+1}$, $\ol{N}$ is the unit normal of $S^n$ in $\mathbb{R}^{n+1}$ and $Y=X+f\ol{N}$, where $X$ is the tangential component of $Y$, then using \eqref{gauss2} and \eqref{weingarten} and the fact that for $S^n$ as a hypersurface in $\mathbb{R}^{n+1}$, $A=-I$, we see that $\del f=X$ and $\del_WX=-fW$, and hence $\mathcal{L}_X\ol g=-2f\ol g$, so $X$ is a conformal vector field with potential $-f$. In fact, all non-Killing conformal vector fields on the unit $S^n$ arise in this manner. (see \cite{moques})

\vspace{0.2cm}

 Let $M$ be an oriented compact minimal hypersurface of $S^7$ satisfying the hypotheses of Theorem \ref{mainthm2}, i.e., $M$ is of constant scalar curvature and the shape operator $A$ of $M$ satisfies $|A|^2>6$. Let $V,\td{V}$ be two non-Killing conformal vector fields on $S^7$ with potential functions $f, \td f$ respectively, arising from two linearly independent constant vector fields on $\mathbb{R}^8$. Let $W, \td W$ be the tangential components on $M$ of $V$ and $\td V$ respectively. Then we have $V=W+sN$ and $\td V=\td W +\td s N$, where $s, \td s:M\rightarrow \mathbb{R}$.

\vspace{0.2cm}

Using \eqref{gauss2} and \eqref{weingarten}, for $X\in \Gamma(TM)$ we get 

\begin{align}\label{wderivative}
\del_XW &= \ol\del_XV-\ol\del_X(sN) \nonumber \\
&=-fX+sAX \\
\del f & = W \label{fderivative} \\
\del s & = -AW \label{sderivative} 
\end{align}

Similarly, we get 
\begin{equation}\label{wtdderivative}
\del_X\td W = -\td fX+\td sAX , \qquad \del \td f = \td W \quad \text{and} \quad  \del \td s = -A\td W
\end{equation}

Now we define the function $h:M\rightarrow \mathbb{R}$ as 

\begin{equation}\label{hdefn}
h=g(\xi W, \td W)
\end{equation}

We are interested in finding $\Delta_M h$. So we compute

\begin{align}
\del_Xh &=\del_Xg(\xi W,\td W) \nonumber \\
&= g((\del_X\xi)W, \td W)+g(\xi(\del_X W), \td W)+g(\xi W, \del_X \td W) \nonumber \\
&=g\big(G(X,N,W)-\g2(N,W,AX)N-B(AX,W)^T,\td W\big) \nonumber \\
& \quad +g(\xi(-fX+sAX), \td W)+g(\xi W, -\td fX+\td sAX) \nonumber \\
& = -g(G(N, W, \td W),X)-g(B(W, \td W)^T, AX)+g(f\xi \td W, X) \nonumber \\
& \quad -g(s\xi \td W, AX)-g(\td f\xi W, X)+g(\td s \xi W, AX) \nonumber \\
\end{align}
so we get

\begin{align}
\del h &= -G(N, W, \td W)-AB(W, \td W)^T+f\xi \td W-sA\xi \td W - \td f \xi W + \td s A\xi W\label{delh}
\end{align}

\vspace{0.3cm}

We use \eqref{wderivative}, \eqref{fderivative}, \eqref{sderivative} and \eqref{wtdderivative} to calculate the divergence of each term in \eqref{delh}. For that, we choose local orthonormal frame $\{e_1,...,e_6\}$ at $p\in M$ such that $Ae_i = a_ie_i, \ \forall \ i$.

\begin{align}
\Div (f\xi \td W)&=  g(\del f , \xi \td W)+f\sum_{i=1}^6[g((\del_i \xi)\td W, e_i)+g(\xi(\del_i\td W), e_i)] \nonumber \\
&=g(W, \xi \td W) + \sum_{i=1}^6 g(\xi(-\td fe_i+\td sAe_i),e_i) \nonumber \\
&=g(W, \xi \td W) + \sum_{i=1}^6[-fg(\xi e_i,e_i)+\td{s}a_ig(\xi e_i,e_i)]\nonumber \\
&=-h \label{div1}
\end{align}
where we have used Lemma \ref{divxi} in the second equality and the definition of $\xi$ to eliminate the terms inside the summation in the third equality. 

\vspace{0.3cm}

Similarly
\begin{align}\label{div2}
\Div(\td f \xi W)=h
\end{align}

\begin{align}
\Div(sA\xi \td W)&= g(\del s, A\xi \td W)+s\sum_{i=1}^6g(\del_i(A\xi \td W), e_i)\nonumber \\
&=-g(AW, A\xi \td W)+s\sum_{i=1}^6[\del_ig(A\xi \td{W},e_i)-g(A\xi \td{W}, \del_{e_i}e_i)] \nonumber \\
&=-g(AW, A\xi \td W)+s\sum_{i=1}^6[g(\del_i\xi \td{W}, Ae_i)+g(\xi \td{W}, \del_{e_i}Ae_i) \nonumber \\
& \qquad \qquad \qquad \qquad \qquad -g(\xi \td{W}, A\del_{e_i}e_i)]\nonumber \\ 
&=-g(AW, A\xi \td W)+s\sum_{i=1}^6[g((\del_{e_i}\xi)\td W,Ae_i)+g(\xi(\del_{e_i}\td W), Ae_i)]\nonumber \\  
&=-g(AW, A\xi \td W) \label{div3}  
\end{align}

\vspace{0.15cm}

\noindent
where in the third equality we have used that $\sum_i (\nabla A)(e_i,e_i)=\sum_i(\del_{e_i}Ae_i-A\del_{e_i}e_i)=0 $ which follows from the Codazzi identity \eqref{codazzi} and the fact that $M$ is minimal, \eqref{delxi defn}, \eqref{wtdderivative} and $Ae_i=a_ie_i$ to eliminate the terms inside the summation in the second last equality.

\vspace{0.3cm}

Similarly 
\begin{align}\label{div4}
\Div(\td sA\xi W)=-g(A\td W, A\xi W)
\end{align}

\vspace{0.4cm}

For calculating $\Div (AB(W, \td W)^T)$, we repeatedly use \eqref{gauss2} and \eqref{weingarten} to first compute

\begin{align}
(\ol \del_ZB)(X,Y)&=\ol \del_Z(B(X,Y))-B(\ol \del_ZX, Y)-B(X, \ol \del_Z Y) \nonumber \\
&= \ol \del_Z\big(B(X,Y)^T+\ol g(B(X,Y), N)N\big)-B(\del_ZX, Y)\nonumber \\
& \quad -g(AZ, X)B(N,Y)-B(X, \del_ZY)-g(AZ,Y)B(X,N) \nonumber \\
&= \del_Z(B(X,Y)^T)+g(AZ, B(X,Y)^T)N +\ol g(\ol \del_Z(B(X,Y)), N)N \nonumber \\
& \quad -\ol g(B(X,Y), AZ)N - \ol g(B(X,Y), N)AZ-B(\del_ZX, Y) \nonumber \\
& \quad -g(AZ, X)B(N,Y) -B(X, \del_ZY)-g(AZ,Y)B(X,N) \nonumber \\
\end{align}

\vspace{0.15cm}

\noindent
where we have written $B(X,Y)$ as a sum of its tangential and normal components in the first term in second equality and then used \eqref{gauss2} in the third equality. So we get

\begin{align}
(\ol \del_ZB)(X,Y)&=\del_Z(B(X,Y)^T)+\ol g((\ol \del_ZB)(X,Y), N)N+\ol g(B(\del_ZX, Y), N)N \nonumber \\
& \quad +g(AZ,X)\ol g(B(N,Y), N)N +\ol g(B(X, \del_ZY), N)N \nonumber \\
& \quad +g(AZ,Y)\ol g(B(X,N),N)N - \ol g(B(X,Y), N)AZ-B(\del_ZX, Y) \nonumber \\
& \quad -g(AZ, X)B(N,Y) -B(X, \del_ZY)-g(AZ,Y)B(X,N) \nonumber \\
&=\del_Z(B(X,Y)^T)+\ol g((\ol \del_ZB)(X,Y), N)N+\ol g(B(\del_ZX, Y), N)N \nonumber \\
&\quad +\ol g(B(X, \del_ZY), N)N- \ol g(B(X,Y), N)AZ-B(\del_ZX, Y) \nonumber \\
& \quad -B(X, \del_ZY)-g(AZ, X)B(N,Y) -g(AZ,Y)B(X,N) \label{div5}
\end{align}

\vspace{0.25cm}

\noindent
where we have used $\ol g(B(N, V),N)=\g2(N,V,N)=0$, $\forall \ V$ in going from fourth to fifth equality. Now using \eqref{G defn}, we see that \eqref{div5} is

\begin{align}
\del_Z(B(X,Y)^T)&=G(Z,X,Y)^T-\ol g(B(\del_ZX, Y), N)N-\ol g(B(X, \del_ZY), N)N \nonumber \\
& \quad +\ol g(B(X,Y), N)AZ+B(\del_ZX, Y) +B(X, \del_ZY) \nonumber \\
& \quad +g(AZ, X)B(N,Y)+g(AZ,Y)B(X,N)\label{div6}
\end{align}

\vspace{0.3cm}

Using \eqref{div6} we calculate

\begin{align}
\Div(AB(W,\td W)^T)&=\sum_{i=1}^6 [g((\del_iA)(B(W,\td W)^T),e_i)+g(\del_{e_i}(B(W, \td W)^T),Ae_i)] \nonumber \\
&= \sum_{i=1}^6 g\big ((G(e_i,W,\td W)^T-\ol g(B(\del_{e_i}W, \td W), N)N \nonumber \\
& \quad -\ol g(B(W, \del_{e_i}\td W), N)N +\ol g(B(W,\td W), N)Ae_i +B(\del_{e_i}W, \td W) \nonumber \\ 
&  \quad +B(W, \del_{e_i}\td W)+g(Ae_i, W)B(N,\td W)+g(Ae_i,\td W)B(W,N)), Ae_i\big ) \nonumber \\
&=\sum_{i=1}^6 [\ol g (B(N,W), \td W)g(Ae_i, Ae_i)+g(B(-fe_i+sAe_i, \td W), Ae_i)\nonumber \\
& \quad +g(B(W, -\td fe_i+\td s Ae_i), Ae_i)+g(e_i, AW)g(B(N,\td W), Ae_i) \nonumber \\
& \quad +g(e_i, A\td W)g(B(W,N), Ae_i)] \nonumber \\
&= |A|^2h+g(AW, A\xi \td W)-g(A\td W, A\xi W)\label{div7}
\end{align}

\medskip

\noindent
where we have used Remark \ref{Gremark} to eliminate the first term inside the summation in the second equality, Proposition \ref{prop3.2} in the third equality and the facts that $\ol g(B(a,b),c)=\g2(a,b,c)$ and $Ae_i=a_ie_i$ in going from the third to last equality.

\vspace{0.3cm}

For calculating $\Div(G(N,W, \td W))$, we first of all note that due to Proposition \ref{prop3.2}, $G(N,X,Y)$ is tangent to $M$ for any $X, Y\in \Gamma(TM)$. We calculate

\begin{align}
\Div(G(N,W, \td W))&=\sum_{i=1}^6 g(\del_i(G(N,W,\td W)), e_i) \nonumber \\
&=\sum_{i=1}^6 [\del_i(g(G(N,W, \td W), e_i))-g(G(N,W,\td W), \del_ie_i)] \nonumber \\
&=\frac{\tau_0}{4}\sum_{i=1}^6[(\del_i\psi)(N,W,\td W,e_i)+\psi(\del_iN,W,\td W, e_i)+\psi(N,\del_iW, \td W,e_i) \nonumber \\
& \quad+\psi(N,W,\del_i \td W,e_i)+\psi(N,W,\td W, \del_ie_i)-\psi(N,W,\td W, \del_ie_i)]\nonumber \\
&=\frac{\tau_0}{4}\sum_{i=1}^6[(\del_i\psi)(N,W,\td W,e_i)-\psi(Ae_i,W,\td W,e_i)]\nonumber \\
&=\frac{\tau_0}{4}\sum_{i=1}^6[\frac{\tau_0}{4}(-g(e_i,N)\g2(W,\td W,e_i)+g(e_i,W)\g2(N,\td W,e_i) \nonumber \\
& \quad-g(e_i,\td W)\g2(N,W,e_i)+g(e_i,e_i)\g2(N,W,\td W))]\nonumber \\
&=\frac{{\tau_0}^2}{16}[g(\xi \td{W}, W)-g(\xi W, \td{W})+\sum_{i=1}^6g(e_i,e_i)g(\xi W, \td W)]\nonumber \\
&=\frac{{\tau_0}^2}{4} h \label{div8}
\end{align}

\medskip

\noindent
where we have used Proposition \ref{prop3.2} in the third equality, \eqref{wderivative}, \eqref{wtdderivative}, $Ae_i=a_ie_i$ and the fact that $\psi$ is a $4$-form to eliminate the $\psi(N,\del_iW,\td{W},e_i)$ and $\psi(N,W,\del_i\td{W},e_i)$ in the third equality, \eqref{torsion3} (expression for $\del_i\psi_{jklm}$) and the fact that for nearly $\G2$ structures $T_{ij}=\frac{\tau_0}{4}g_{ij}$ in the fifth equality.

\vspace{0.2cm}

Using the fact that for the unit $S^7$, $\tau_0=4$, \eqref{delh}, \eqref{div1}, \eqref{div2}, \eqref{div3}, \eqref{div4}, \eqref{div7} and \eqref{div8} we see that

\begin{align}
\Delta_Mh&=-4h-|A|^2h-g(AW,A\xi \td{W})+g(A\td{W}, A\xi W)-h+g(AW, A\xi \td{W})\nonumber \\
& \quad -h-g(A\td{W}, A\xi W)
\end{align}
so
\begin{equation}\label{laplaceh}
\Delta_Mh = -(|A|^2+6)h
\end{equation}

\vspace{0.2cm}

Now if $h$ is a constant function then \eqref{laplaceh} implies that $h=0$, i.e., $g(\xi(W), \td W)=0$. Recall that $\td W$ is the tangential component of a non-Killing conformal vector field $\td V$ on $S^7$ where $\td{V}$ is the tangential component of any constant vector field on $\mathbb{R}^8$. The vector field $W$ was obtained in a similar manner by taking the tangential component of a non-Killing conformal vector field $V$ on $S^7$ which was obtained as the tangential component of a constant vector field on $\mathbb{R}^8$ which was linearly independent from the constant vector field which gives $\td{V}$. So if $g(\xi(W), \td{W})=0$ for all $\td{W}$,  we get $\xi(W)=0$, i.e., $B(N,W)=0$. This is a contradiction because $\xi$ is invertible and $N$ is a unit vector. Hence there exists $W,\ \td{W}$ such that $h$ is not constant and \eqref{laplaceh} implies that $h$ is an eigenfunction of $\Delta_M$ corresponding to the eigenvalue $\lambda = |A|^2+6$. So if $|A|^2>6$ then $\lambda>12$ with $|A|^2=\lambda -6$. The proof of Theorem \ref{mainthm2} is now complete.

\nocite{Kar03} \nocite{lee-leung} \nocite{lotay-wei1}

\bibliographystyle{amsplain}
\bibliography{Readinglist}

\end{document}